\newtheorem*{Thm}{Theorem}
\newtheorem{thm}{Theorem}
\newtheorem*{cor}{Corollary}
\newtheorem*{lem}{Lemma}
\theoremstyle{definition}
\newtheorem*{rem}{Remark}
\newtheorem*{rems}{Remarks}
\def\ep{\varepsilon}
\def\F{{\mathcal F}}
\def\S{{\mathcal S}}
\def\R{{\mathcal R}}
\def\real{{\mathbb R}}
\def\whf{\widehat f\,}
\begin{document}
\title
[Multilinear Embedding Estimates for the Fractional Laplacian]
{Multilinear Embedding Estimates for\\ the Fractional Laplacian}
\author{William Beckner}
\address{Department of Mathematics, The University of Texas at Austin,
1 University Station C1200, Austin TX 78712-0257 USA}
\email{beckner@math.utexas.edu}
\begin{abstract}
Three novel multilinear embedding estimates for the fractional Laplacian are 
obtained in terms of trace integrals restricted to the diagonal.
The resulting sharp inequalities may be viewed as extensions of the 
Hardy-Littlewood-Sobolev inequality, the Gagliardo-Nirenberg 
inequality and Pitt's inequality.
\end{abstract}
\maketitle

Sobolev embedding estimates are a central tool for analysis on geometric 
manifolds. 
Natural questions arise with the study of multilinear operators and product 
manifolds that incorporate intrinsic geometric symmetry. 
New realizations for the fractional Laplacian have emerged as critical 
elements for resolving challenging issues in nonlinear analysis and 
conformal geometry.
Development of a rigorous framework for central problems in mathematical 
physics, including the structure and stability of matter and the 
dynamics of many-body interaction, has suggested new applications for 
estimates that measure fractional smoothness.
Direct methods of approach have proved highly successful, but 
determining intrinsic connections with the overall framework of Sobolev 
embedding and fractional integrals is important and useful to gain new 
insight and increased understanding for the analytical structure. 
The effort to calculate optimal constants for embedding estimates and 
convolution integrals underlines not only intrinsic features for exact model
problems and encoded geometric information, but lays the groundwork for 
calculating precise lower-order effects 
(see \cite{Beckner-Forum}, \cite{FLS-JAMS}). 
Motivated by current interest to model the many-body dynamics of a 
Bose gas using the Gross-Pitaevskii hierarchy of density matrices, 
three new results are given for multilinear embeddings for the fractional 
Laplacian on $\real^n$ that can be viewed as separate extensions of the 
Hardy-Littlewood-Sobolev inequality, the Gagliardo-Nirenberg inequality 
and Pitt's inequality for the Fourier transform with weights. 
The simplicity of the argument underscores both the naturalness and the 
novelty of the result. 
The context for these inequalities has two explicit themes:
(1)~the development of the Gross-Pitaevskii hierarchy to describe 
multi-particle dynamics requires control of multilinear smoothing 
estimates where the evident natural question is to determine the analytic 
control that the smoothing estimates give in terms of physical measures 
such as trace integrals (see the operators $S_j$ and $R_j$ and the 
iterated multiplier argument in \cite{KM}); and \eqref{eq2} 
to determine the behavior of convolution integrals and Young's inequality 
for multilinear product decomposition of the manifold in the context of 
Riesz potentials and Stein-Weiss fractional integrals.

Consider the convolution integral with multi-component decomposition 
$$F(w) =\int_{\real^n\times\cdots\times\real^n} 
G(w-y)\, H(y)\,dy\ ,\qquad w\in \real^{mn}$$
\begin{itemize}
\item[(1)] ``diagonal trace restriction''
$$F(w) \rightsquigarrow F(\underbrace{x,\ldots,x}_{m\text{ slots}}) 
\equiv F(x)\ ,\qquad x\in \real^n$$
\item[(2)] ``multilinear products''
$$F(x) = \int_{\real^{mn}} \prod g_k (x-y_k)\, H(y_1,\ldots,y_m)\, dy\ ;$$
\end{itemize}
the objective is to determine how the components of $G$ and the 
nature of $H$ control the size of $F$.
An important point to emphasize initially is that only for special cases 
will the analysis reduce to an iterative or product function characterization.
Here the $g_k$'s will be taken as inputs, including Riesz potentials, so 
the multilinear map is given by 
$$H\in L^p (\real^{mn}) \rightsquigarrow F\in L^q (\real^n)\ .$$

A relatively simple lemma that characterizes this framework can be easily 
obtained from the classical Young's inequality.

\begin{lem} 
For $1\le p,q<\infty$ with $1<s_k < p'$ and $m/p' + 1/q = \sum 1/s_k$ 
(primes denote dual exponents, $1/p + 1/p' =1$)
$$\|F\|_{L^q(\real^n)} \le C\prod \|g_k\|_{L^{s_k}(\real^n)}  
\|H\|_{L^p (\real^{mn})}$$
\end{lem}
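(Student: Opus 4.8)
The plan is to read the bound, as the lemma suggests, as a soft consequence of Young's inequality: it is the interpolant of two endpoint estimates, each of which comes in a single line from Young's inequality together with Minkowski's integral inequality and the generalized Hölder inequality. Throughout I may assume $g_k,H\ge 0$, and I regard $F$ as the value of the $(m+1)$-linear map $T\colon(g_1,\dots,g_m,H)\mapsto F$; it then suffices to bound $T$ at two endpoints and interpolate.

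\emph{Endpoint one} is $T\colon L^1(\real^n)^m\times L^\infty(\real^{mn})\to L^\infty(\real^n)$ with norm at most $1$: indeed $|F(x)|\le\|H\|_{L^\infty}\int_{\real^{mn}}\prod_k g_k(x-y_k)\,dy=\|H\|_{L^\infty}\prod_k\|g_k\|_{L^1}$, the integral factoring because the variables $y_k$ separate. \emph{Endpoint two:} set $q_0:=q/p$ and define $s_k^{(0)}$ by $1/s_k^{(0)}:=p/s_k-p+1$; then $T\colon L^{s_1^{(0)}}(\real^n)\times\cdots\times L^{s_m^{(0)}}(\real^n)\times L^1(\real^{mn})\to L^{q_0}(\real^n)$ with norm at most $1$. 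To see this, use Minkowski's integral inequality to move the $L^{q_0}_x$-norm inside the $dy$-integral, $\|F\|_{L^{q_0}_x}\le\int_{\real^{mn}}H(y)\,\|\prod_k g_k(x-y_k)\|_{L^{q_0}_x}\,dy$, and then the generalized Hölder inequality on $\real^n$ — legitimate because the hypothesis $m/p'+1/q=\sum_k 1/s_k$ rearranges to $\sum_k 1/s_k^{(0)}=p/q=1/q_0$ — to bound $\|\prod_k g_k(x-y_k)\|_{L^{q_0}_x}\le\prod_k\|g_k\|_{L^{s_k^{(0)}}}$ uniformly in $y$; the remaining $\int_{\real^{mn}}H$ contributes $\|H\|_{L^1(\real^{mn})}$.

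The assertion then follows from the multilinear Riesz--Thorin interpolation theorem applied to $T$ between these two endpoints at $\theta=1/p'$, so that endpoint two carries weight $1-\theta=1/p$: a direct check of the interpolation relations recovers the target exponents, $1/s_k=(1-\theta)/s_k^{(0)}+\theta=(1/p)(p/s_k-p+1)+1/p'=1/s_k$, $1/p=(1-\theta)\cdot 1+0=1/p$, and $1/q=(1-\theta)/q_0+0=(1/p)(p/q)=1/q$, whence $T\colon L^{s_1}\times\cdots\times L^{s_m}\times L^p\to L^q$ with $C=1$. The step I expect to carry the weight of the argument is the verification that every exponent occurring at the endpoints is admissible: one checks that $s_k^{(0)}\in(1,\infty)$ holds \emph{exactly} when $1<s_k<p'$, which is where the two‑sided hypothesis on the $s_k$ is used sharply, and that $q_0=q/p\ge 1$ — needed both for Minkowski's inequality in endpoint two and for a Banach endpoint in Riesz--Thorin — corresponds to $q\ge p$. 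For $m=1$ the hypotheses already force $q>p$; in general one should read the hypotheses with $q\ge p$ (the case $q=p$ being borderline), and for $q<p$ one would have to replace Riesz--Thorin by its quasi‑Banach extension for the $L^{q_0}$ endpoint.
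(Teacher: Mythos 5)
Your argument is correct as far as it goes, but it takes a genuinely different route from the paper and leaves a real gap. The paper's proof is a short duality argument: it passes to the adjoint estimate
$\bigl\| \int \prod_k g_k(x-y_k)\,h(x)\,dx \bigr\|_{L^{p'}(dy)} \le C\prod\|g_k\|_{s_k}\|h\|_{q'}$,
splits $h=\prod_k h^{\beta_k}$ with $\beta_k=q/s_k-q/p'$ (so $\sum\beta_k=1$), applies H\"older in $x$, uses that the resulting bound is a tensor product over the separate $y_k$ variables so the $L^{p'}(dy)$ norm factors, and finishes with classical Young in each factor. You instead avoid duality entirely and interpolate the $(m+1)$-linear map $T$ between an $L^1\times\cdots\times L^1\times L^\infty\to L^\infty$ endpoint (Fubini) and an $L^{s_1^{(0)}}\times\cdots\times L^{s_m^{(0)}}\times L^1\to L^{q_0}$ endpoint (Minkowski plus H\"older). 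Your exponent bookkeeping is correct: $\sum_k 1/s_k^{(0)}=p/q=1/q_0$, the admissibility $s_k^{(0)}\in(1,\infty)\Leftrightarrow 1<s_k<p'$ is exactly where the two-sided hypothesis on $s_k$ enters, and the interpolation at $\theta=1/p'$ recovers $(s_k,p,q)$. This is a clean and modular alternative; its chief virtue is that the endpoints isolate the two mechanisms (factorization of the $dy$-integral, H\"older on the diagonal slice) that the paper compresses into one duality step.

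The genuine gap is the one you flag at the end but do not resolve: your second endpoint and the Banach-space multilinear Riesz--Thorin theorem both require $q_0=q/p\ge1$, i.e.\ $q\ge p$. The hypotheses of the lemma do \emph{not} force this when $m\ge 2$. They only force $q>p/m$: since $1<s_k<p'$ gives $1/p'<1/s_k<1$, the balance $m/p'+1/q=\sum 1/s_k$ yields $0<1/q<m/p$ and nothing more. For instance $m=2$, $p=3$, $s_1=s_2=12/11$ gives $q=2<p$. In that regime $q_0<1$, Minkowski's integral inequality fails for the $L^{q_0}_x$-norm, and you would need a quasi-Banach multilinear interpolation theorem together with a different endpoint argument; neither is supplied. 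So as written your proof establishes the lemma only under the additional hypothesis $q\ge p$, whereas the statement (and the paper's duality argument, which treats all $m$ factors symmetrically in $y$) does not carry that restriction. To close the gap you would either need to invoke and verify the hypotheses of a quasi-Banach multilinear interpolation theorem for the $L^{q_0}$ target, or switch, as the paper does, to the dual operator $T^*\colon L^{q'}(\real^n)\to L^{p'}(\real^{mn})$, where the target exponent $p'$ is always at least $1$.
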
 

\begin{proof} 
Consider the dual problem 
$$\bigg[ \int \Big| \int \prod_k g_k (x-y_k)\, h(x)\,dx\Big|^{p'}\,dy
\bigg]^{1/p'} 
\le C \prod \|g_k\|_{L^{s_k}(\real^n)} \|h\|_{L^{q'}(\real^n)}$$
Choose the sequence $\beta_k = q/s_k - q/p'$ with $\sum \beta_k=1$ 
and apply H\"older's inequality on the left-hand side.
\end{proof}

\begin{rems}
(1)~Lorentz-space extensions follow using interpolation and 
Hardy-Little-wood-Sobolev arguments.
(2)~In the special case where $H$ is radial decreasing and so bounded above 
by a multiple of $|y|^{-mn/p}$ which allows a splitting of $H$ by 
non-uniform inverse powers of $|y_k|$, Kenig and Stein \cite{KS-99} 
show that the allowed range of Lebesgue exponents is full for the 
indices $s_k$ and that the index $q$ can go below one.
This latter result is not possible for the general case.
(See their Lemma~7 on page~7 of \cite{KS-99}.)
(3)~The classical notion of trace is expanded here to include any integral 
which is calculated over the diagonal restriction for variables.
(4)~In the special case where $p=q=2$, the optimal value for the 
constant $C$  is 
$$\int_{\real^n} \prod (g_k * g_k) (x)\,dx\ .$$
(5)~More general versions for multilinear fractional integral kernels 
are treated in Christ \cite{Christ-TAMS}, and in the conformally 
invariant setting by Beckner \cite{Beckner-PUP95} (see Theorem~6 on 
pages~48--49). 
(6)~A formative treatment for rigorously describing dynamical processes with
many-body interaction in macroscopic systems appears in Spohn \cite{Spohn-80}.
(7)~Trace integrals have an intrinsic analytic character which allows facility in 
making exact calculations. 
Physical motivation for analytic adaptation of the trace integral is described 
in \cite{Bardos} and \cite{Spohn-80}.
But the results described here are natural since fractional smoothness determines
that restriction to a linear sub-variety is well defined (see chapter~6 in \cite{Stein70}, 
Theorem~11 in \cite{Calderon},  the main theorem in \cite{Stein62} and 
discussion on page 32 in \cite{Mazya85}).
Implicit recognition of this structure underlies one argument in \cite{KM}. 
\end{rems}

The square-integrable paradigm that represents the motivating step for 
the arguments developed here is the following representation for the 
Hardy-Littlewood-Sobolev inequality:

\begin{lem}
For $f\in \S (\real^n)$, $1<p<2$ and $\alpha = n (1/p - 1/2)$ 
\begin{gather*}
\int_{\real^n} |f|^2\,dx \le C_p \bigg[ \int_{\real^n} 
|(-\Delta/4\pi^2)^{\alpha/2} f|^p \,dx\bigg]^{2/p}\\
\noalign{\vskip6pt}
C_p = \pi^{n/p - n/2} \Big[ \Gamma (n/p')/\Gamma (n/p)\Big]
\Big[ \Gamma (n)/\Gamma (n/2)\Big]^{2/p-1}
\end{gather*}
\end{lem}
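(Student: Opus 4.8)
The plan is to read off the inequality as the dual form of the sharp Hardy--Littlewood--Sobolev inequality. Put $g=(-\Delta/4\pi^2)^{\alpha/2}f$, so that $\widehat g(\xi)=|\xi|^{\alpha}\whf(\xi)$ and hence $\whf(\xi)=|\xi|^{-\alpha}\widehat g(\xi)$. The hypothesis $1<p<2$ is equivalent to $0<\alpha<n/2$, so $|\xi|^{-\alpha}$ and $|\xi|^{-2\alpha}$ are both locally integrable tempered distributions, with inverse Fourier transforms the Riesz kernels
$$k_\beta(x)=\pi^{\beta-n/2}\,\frac{\Gamma\big((n-\beta)/2\big)}{\Gamma(\beta/2)}\,|x|^{-(n-\beta)}\ ,\qquad\beta=\alpha,\ 2\alpha\ ;$$
since $2\alpha<n$ the Riesz semigroup law gives $k_\alpha*k_\alpha=k_{2\alpha}$, and in particular $f=k_\alpha*g$ (for $f\in\S(\real^n)$ one verifies that $g\in L^p(\real^n)$, so no side is vacuously infinite).

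The key step is the Plancherel identity
$$\int_{\real^n}|f|^2\,dx=\int_{\real^n}|\whf(\xi)|^2\,d\xi=\int_{\real^n}|\xi|^{-2\alpha}|\widehat g(\xi)|^2\,d\xi=\int_{\real^n}(k_{2\alpha}*g)(x)\,\overline{g(x)}\,dx\ ,$$
which, since $k_{2\alpha}$ is a positive kernel, is bounded by $\gamma_{2\alpha}\iint|g(x)|\,|x-y|^{-(n-2\alpha)}|g(y)|\,dx\,dy$ with $\gamma_{2\alpha}=\pi^{2\alpha-n/2}\Gamma\big((n-2\alpha)/2\big)/\Gamma(\alpha)$. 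Now invoke Lieb's sharp Hardy--Littlewood--Sobolev inequality in the diagonal case with $\lambda=n-2\alpha$: the scaling relation $2/p+\lambda/n=2$ holds exactly because $\alpha=n(1/p-1/2)$, so
$$\iint_{\real^n\times\real^n}\frac{|g(x)|\,|g(y)|}{|x-y|^{n-2\alpha}}\,dx\,dy\le C(n,\lambda)\,\|g\|_{L^p(\real^n)}^2\ ,$$
with $C(n,\lambda)=\pi^{\lambda/2}\,\Gamma(n/2-\lambda/2)\,\Gamma(n-\lambda/2)^{-1}\big(\Gamma(n/2)/\Gamma(n)\big)^{\lambda/n-1}$ and extremal $g\propto(1+|x|^2)^{-n/p}$.

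It remains to collate the constants. With $\lambda=n-2\alpha$ one has $\lambda/2=n/2-\alpha$, $(n-\lambda)/2=\alpha$, $n-\lambda/2=n/2+\alpha$, $\lambda/n-1=-2\alpha/n$, so after cancelling $\Gamma(\alpha)$ the product $\gamma_{2\alpha}C(n,\lambda)$ collapses to
$$\pi^{\alpha}\,\frac{\Gamma\big((n-2\alpha)/2\big)}{\Gamma(n/2+\alpha)}\,\Big(\frac{\Gamma(n)}{\Gamma(n/2)}\Big)^{2\alpha/n}\ ,$$
and the identities $\alpha=n/p-n/2$, $(n-2\alpha)/2=n/p'$, $2\alpha/n=2/p-1$ turn this verbatim into the asserted $C_p$, the sharpness being inherited from Lieb's inequality. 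The only genuine work is this bookkeeping of $\pi$-powers and Gamma factors --- produced by the $4\pi^2$ normalization of the Laplacian, the Riesz-kernel constants, and Lieb's constant --- together with a consistent use of the Fourier convention; all of the analytic substance is packed into the classical sharp HLS inequality. One could also view the Plancherel identity above as a sharp Pitt inequality $\big\||\xi|^{-\alpha}\widehat g\big\|_{L^2(\real^n)}\le\sqrt{C_p}\,\|g\|_{L^p(\real^n)}$ and quote its known optimal constant, or recover the constant intrinsically by transplanting to $S^n$ under stereographic projection and diagonalizing over spherical harmonics, which is the argument that displays the conformal geometry behind the extremal.
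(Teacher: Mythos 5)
Your proposal is correct and is precisely the equivalence that the paper itself asserts without carrying out: the paper simply notes that the lemma ``is an equivalent formulation using the fractional Laplacian for the sharp Hardy--Littlewood--Sobolev inequality calculated by Lieb.'' You have filled in the details — the substitution $g=(-\Delta/4\pi^2)^{\alpha/2}f$, the Plancherel step, the Riesz composition $k_\alpha * k_\alpha = k_{2\alpha}$, and the Gamma-function bookkeeping with $\lambda=n-2\alpha$ — and your final simplification to $C_p=\pi^{n/p-n/2}\,\Gamma(n/p')\Gamma(n/p)^{-1}\,(\Gamma(n)/\Gamma(n/2))^{2/p-1}$ matches the stated constant exactly, so this is the same approach made explicit rather than a different one.
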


\noindent 
This lemma is an equivalent formulation using the fractional Laplacian for 
the sharp Hardy-Littlewood-Sobolev inequality calculated by Lieb \cite{Lieb}. 

Consider $m$ copies of $\real^n$ and let $f$ be in the Schwartz class 
$\S(\real^{mn})$.
Define
\begin{equation*}
(\F f) (x) = \whf (x) = \int e^{2\pi ixy} f(y)\, dy\ .
\end{equation*}
Observe that on $\real^n$ with $0<\lambda < n$
\begin{equation*}
\F\big[ |x|^{-\lambda}\big] = \pi^{-n/2 \,+\, \lambda} 
\left[\frac{\Gamma \big(\frac{n-\lambda}2\big)}{\Gamma\big(\frac{\lambda}2
\big)}\right] \, |x|^{-(n-\lambda)}\ .
\end{equation*}
For $f\in \S(\real^{mn})$, $\Delta_k=$ standard Laplacian on $\real^n$ 
in the variable $x_k$, $0<\alpha_k < n$, $\alpha = \sum \alpha_k$ for 
$k=1$ to $m$ and $(m-1)n < \alpha < mn$, define 
\begin{align*}
\Lambda (f;\alpha_1,\ldots,\alpha_m) 
& = \int_{\real^n\times\cdots\times\real^n} 
\Big| \prod_{k=1}^m (-\Delta_k/4\pi^2)^{\alpha_k/4} f\Big|^2\, 
dx_1 \ldots dx_m\\
\noalign{\vskip6pt}
& = \int_{\real^n\times\cdots\times\real^n} 
\prod_{k=1}^m |\xi_k|^{\alpha_k} |\whf|^2\, 
d\xi_1\ldots d\xi_m\ .
\end{align*}

\begin{thm}[Pitt's inequality]\label{thm1}
For $f\in \S(\real^{mn})$ and $n-\beta = mn-\alpha$
\begin{equation}\label{eq1}
\int_{\real^n} |x|^{-\beta} 
|f(\,\underbrace{x,\ldots,x}_{\text{$m$ slots}}\,)|^2\,dx 
\le C_\beta \ \Lambda (f;\alpha_1,\ldots,\alpha_m)
\end{equation}
\begin{equation*}
C_\beta = \pi^{-(m-1)n/2 +\alpha} 
\prod_{k=1}^m 
\left[\frac{\Gamma\big(\frac{n-\alpha_k}2\big)}{\Gamma\big(\frac{\alpha_k}2
\big)}\right]
\left[\frac{\Gamma\big(\frac{\beta}2\big)}{\Gamma\big(\frac{n-\beta}2\big)}
\right]
\left[\frac{\Gamma\big(\frac{n-\beta}4\big)}{\Gamma\big(\frac{n+\beta}4\big)}
\right]^2
\end{equation*}
The constant $C_\beta$ is sharp and no extremals exist for this inequality.
\end{thm}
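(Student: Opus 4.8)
The plan is to pass to the Fourier transform, realize the diagonal restriction $f\mapsto f(x,\ldots,x)$ as an integration of $\widehat f$ over the fibres of the map $\zeta\mapsto\zeta_1+\cdots+\zeta_m$, and then reduce the whole estimate to the sharp Pitt inequality on $\real^n$ together with one application of the Cauchy--Schwarz inequality whose weight is an $m$-fold convolution of Riesz kernels computable in closed form. Throughout, write $a_\lambda=\pi^{\lambda-n/2}\,\Gamma\!\big(\tfrac{n-\lambda}{2}\big)\big/\Gamma\!\big(\tfrac{\lambda}{2}\big)$ for the constant in the formula $\F[|x|^{-\lambda}]=a_\lambda|x|^{-(n-\lambda)}$ recorded above, and note the reciprocity $a_\lambda a_{n-\lambda}=1$. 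The hypotheses translate to $\beta=\alpha-(m-1)n\in(0,n)$ and each $\alpha_k\in(0,n)$, which is exactly the range in which all the kernels below are well defined.

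Concretely, I would set $g(x)=f(x,\ldots,x)$, which lies in $\S(\real^n)$, and observe --- by Fourier inversion, since the transform of a restriction to a linear subspace is a partial integral of the transform --- that $\widehat g(\xi)=\int_{\{\zeta_1+\cdots+\zeta_m=\xi\}}\widehat f(\zeta)\,d\mu_\xi(\zeta)$, where $d\mu_\xi$ is the disintegration of Lebesgue measure along $\zeta\mapsto\sum_k\zeta_k$, so that $\int_{\real^n}\!\big(\int_{\text{fibre}}\Phi\,d\mu_\xi\big)d\xi=\int_{\real^{mn}}\Phi\,d\zeta$ for every $\Phi$. The next step is to evaluate $\int_{\text{fibre}}\prod_k|\zeta_k|^{-\alpha_k}\,d\mu_\xi$: this is precisely $\big(|x|^{-\alpha_1}*\cdots*|x|^{-\alpha_m}\big)(\xi)$, and since $\sum_k(n-\alpha_k)=mn-\alpha=n-\beta\in(0,n)$, taking Fourier transforms and using $a_\lambda a_{n-\lambda}=1$ gives $\int_{\text{fibre}}\prod_k|\zeta_k|^{-\alpha_k}\,d\mu_\xi=K|\xi|^{-\beta}$ with $K=a_{n-\beta}\prod_k a_{\alpha_k}$. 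Factoring $\widehat f=\big(\prod_k|\zeta_k|^{\alpha_k/2}\widehat f\big)\big(\prod_k|\zeta_k|^{-\alpha_k/2}\big)$ and applying Cauchy--Schwarz to the fibre integral defining $\widehat g$ then produces the pointwise bound $|\widehat g(\xi)|^2\le K|\xi|^{-\beta}\int_{\text{fibre}}\prod_k|\zeta_k|^{\alpha_k}|\widehat f|^2\,d\mu_\xi$.

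To conclude, I would invoke the sharp Pitt inequality on $\real^n$ in the dual form $\int_{\real^n}|x|^{-\beta}|g(x)|^2\,dx\le D_\beta\int_{\real^n}|\xi|^\beta|\widehat g(\xi)|^2\,d\xi$, with $D_\beta=\pi^\beta\big[\Gamma\!\big(\tfrac{n-\beta}{4}\big)\big/\Gamma\!\big(\tfrac{n+\beta}{4}\big)\big]^2$ --- equivalent, via $\widehat{\widehat g}(x)=g(-x)$, to Beckner's sharp Pitt inequality, and inheriting both its best constant and its lack of extremals. Combining this with the pointwise bound above and the Fubini identity, the decisive cancellation $|\xi|^\beta\cdot|\xi|^{-\beta}=1$ collapses the iterated integral, giving $\int_{\real^n}|x|^{-\beta}|f(x,\ldots,x)|^2\,dx\le D_\beta K\int_{\real^n}\big(\int_{\text{fibre}}\prod_k|\zeta_k|^{\alpha_k}|\widehat f|^2\,d\mu_\xi\big)d\xi=D_\beta K\,\Lambda(f;\alpha_1,\ldots,\alpha_m)$. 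A routine bookkeeping of $\Gamma$-factors and powers of $\pi$, using $\alpha=\beta+(m-1)n$, then confirms $D_\beta K=C_\beta$. For sharpness and the absence of extremals: equality in the fibre Cauchy--Schwarz forces $\widehat f(\zeta)=c\big(\sum_k\zeta_k\big)\prod_k|\zeta_k|^{-\alpha_k}$ for some $c$ on $\real^n$, whence $\widehat g(\xi)=K\,c(\xi)|\xi|^{-\beta}$, so an extremal $f$ would force $g$ to be an extremal for Pitt's inequality, which has none; conversely, taking $c=\widehat{\psi_j}$ with $\psi_j$ a Pitt-extremizing sequence and suitably regularizing $\prod_k|\zeta_k|^{-\alpha_k}\,c\big(\sum_k\zeta_k\big)$ within $\S(\real^{mn})$ drives the ratio in \eqref{eq1} toward $C_\beta$.

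I expect the only genuinely delicate point to be the evaluation of the $m$-fold Riesz convolution on a fibre: one must check that $|x|^{-\alpha_1}*\cdots*|x|^{-\alpha_m}$ converges absolutely precisely under the stated hypotheses $0<\alpha_k<n$ and $(m-1)n<\alpha<mn$ --- the dangerous configurations being the partial diagonals where some $\zeta_j\to0$ and the directions along a fibre in which several $|\zeta_k|\to\infty$ simultaneously --- and that its value is the function $K|\xi|^{-\beta}$, not merely an equality of tempered distributions. A secondary, more routine technical step is producing the regularized extremizing family inside $\S(\real^{mn})$ that mimics the non-existent Pitt extremal; once the two reductions are in place, the identification of the constant is forced by homogeneity.
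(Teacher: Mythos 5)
Your proposal is correct, and it takes a genuinely different route from the paper's. The paper passes to physical space: it rewrites $(-\Delta_k/4\pi^2)^{\alpha_k/4}f$ as a Riesz-potential convolution, invokes $L^2$ duality to flip the inequality into a bound on a multilinear operator $L^2(\real^n)\to L^2(\real^{mn})$, reduces to radial decreasing $g$ by rearrangement, integrates out the intermediate $y_k$ variables exactly, and lands on the sharp Stein--Weiss bilinear form from \cite{Beckner-Forum08}. You instead stay entirely on the Fourier side: you realize $\widehat{f(x,\ldots,x)}$ as a fibre integral of $\widehat f$ over $\{\sum\zeta_k=\xi\}$, evaluate the $m$-fold Riesz convolution on the fibre, apply Cauchy--Schwarz pointwise in $\xi$ with the weight $\prod_k|\zeta_k|^{\mp\alpha_k/2}$, and finish with the sharp one-variable Pitt inequality. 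Both proofs ultimately discharge the load onto the same sharp Stein--Weiss/Pitt constant, and the bookkeeping of $\pi$-powers and $\Gamma$-factors that you outline does give $D_\beta K=C_\beta$, which I verified. Your route avoids both duality and rearrangement and makes the mechanism of diagonal restriction quite transparent (it is convolution on the frequency side); the paper's route, by contrast, is a chain of genuine equivalences, which makes sharpness and non-existence of extremals automatic, and sets up directly the Stein--Weiss trace framework used for Theorems~\ref{thm3}--\ref{iteratedSW}.

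Two points in your argument carry the real burden, and you flag both: (i) the $m$-fold convolution $|x|^{-\alpha_1}*\cdots*|x|^{-\alpha_m}$ must converge absolutely to $K|\xi|^{-\beta}$ as a function under the hypotheses $0<\alpha_k<n$ and $(m-1)n<\alpha<mn$ --- this holds but needs to be said, since otherwise the Cauchy--Schwarz weight is ill-defined; and (ii) sharpness requires near-equality in the Cauchy--Schwarz step and the Pitt step simultaneously. For (ii) the one-directional Cauchy--Schwarz is the place where your route demands extra care compared to the paper's chain of equivalences. The equality direction forces $\widehat f(\zeta)=c(\sum_k\zeta_k)\prod_k|\zeta_k|^{-\alpha_k}$, which is not Schwartz, so the extremizing family has to be mollified inside $\S(\real^{mn})$ and one must check the ratio in \eqref{eq1} is continuous along the regularization; this is genuinely routine but should be carried out rather than asserted, since the non-existence of extremals (and the strictness of $\beta>0$, noted in the remarks after the theorem) means the supremum is only approached and not attained, and sloppy mollification could in principle lose it.
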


\begin{thm}[Hardy-Littlewood-Sobolev inequality]\label{thm2}
For $f\in \S(\real^{mn})$ and $mn-\alpha = 2n/q$
\begin{equation}\label{eq2}
\left[\int_{\real^n} |f(\,\underbrace{x,\ldots,x}_{\text{$m$ slots}}\,)|^q\,dx
\right]^{2/q} 
\le F_\alpha \ \Lambda (f;\alpha_1,\ldots,\alpha_m)
\end{equation}
\begin{equation*}
F_\alpha = \pi^{\alpha/2} \prod_{k=1}^m 
\left[\frac{\Gamma\big(\frac{n-\alpha_k}2\big)}{\Gamma\big(\frac{\alpha_k}2\big)
}\right]
\left[\frac{\Gamma\big(\frac{\alpha-(m-1)n}2\big)}
{\Gamma\big(n-\frac{mn-\alpha}2 \big)}\right] 
\left[\frac{\Gamma (n)}{\Gamma \big(\frac{n}2\big)}\right]
^{\frac{\alpha-(m-1)n}n}
\end{equation*}
The constant $F_\alpha$ is sharp and extremals are given by 
\begin{equation*}
f(x_1,\ldots,x_m) 
= \int_{\real^n} \prod_{k=1}^m |x_k -w|^{-(n-\alpha_k/2)}
|1 + w^2|^{-n/q}\,dw
\end{equation*}
up to conformal automorphism of the factor $|1+w^2|^{-n/q}$.
\end{thm}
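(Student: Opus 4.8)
The plan is to carry out the argument entirely on the Fourier side, following the scheme used for Theorem~\ref{thm1}, but with the scalar Pitt inequality there replaced by the second (square-integrable) Lemma above --- the sharp $L^p$-form of the Hardy--Littlewood--Sobolev inequality attributed to Lieb. First I would combine Fourier inversion with the diagonal restriction:
\[
f(\underbrace{x,\dots,x}_{m}) \;=\; \int_{\real^{mn}} \widehat f(\xi_1,\dots,\xi_m)\, e^{-2\pi i x\cdot(\xi_1+\cdots+\xi_m)}\,d\xi\ ,\qquad x\in\real^n\ .
\]
By $L^q$--$L^{q'}$ duality it then suffices to bound $\big|\int_{\real^n} f(x,\dots,x)\,\overline{h(x)}\,dx\big|$ for $h\in\S(\real^n)$ with $\|h\|_{L^{q'}}\le1$; integrating in $x$ first rewrites this pairing as $\int_{\real^{mn}}\widehat f(\xi)\,\overline{\widehat h(\xi_1+\cdots+\xi_m)}\,d\xi$. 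Splitting the integrand as the product of $\prod_k|\xi_k|^{\alpha_k/2}\widehat f(\xi)$ and $\prod_k|\xi_k|^{-\alpha_k/2}\,\overline{\widehat h(\xi_1+\cdots+\xi_m)}$ and applying the Cauchy--Schwarz inequality bounds it by
\[
\Lambda(f;\alpha_1,\dots,\alpha_m)^{1/2}\;\Big(\int_{\real^{mn}}\prod_{k=1}^m|\xi_k|^{-\alpha_k}\,\big|\widehat h(\xi_1+\cdots+\xi_m)\big|^2\,d\xi\Big)^{1/2}\ .
\]

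The next step is to evaluate the second factor. Freezing $u=\xi_1+\cdots+\xi_m$ identifies the inner integral over the fibre with the $m$-fold convolution $\big(|\cdot|^{-\alpha_1}*\cdots*|\cdot|^{-\alpha_m}\big)(u)$ on $\real^n$. Since $0<\alpha_k<n$ for each $k$ and $(m-1)n<\alpha<mn$ --- equivalently $\sum_k(n-\alpha_k)<n$ --- every partial convolution of these kernels converges absolutely and is a constant multiple of $|\cdot|^{-\gamma}$ with $\gamma\in(0,n)$; taking Fourier transforms turns the full convolution into the product $\prod_k\F[|x|^{-\alpha_k}]=c\,|\xi|^{-(n-\beta)}$, and inverting via the rule $\F[|x|^{-\lambda}]=\pi^{\lambda-n/2}\big[\Gamma(\tfrac{n-\lambda}2)/\Gamma(\tfrac{\lambda}2)\big]|x|^{-(n-\lambda)}$ gives $\big(|\cdot|^{-\alpha_1}*\cdots*|\cdot|^{-\alpha_m}\big)(u)=c_{\alpha_1,\dots,\alpha_m}\,|u|^{-\beta}$, with $\beta=\alpha-(m-1)n\in(0,n)$ and $n-\beta=mn-\alpha=2n/q$. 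Hence the second factor equals $c_{\alpha_1,\dots,\alpha_m}^{1/2}\big(\int_{\real^n}|u|^{-\beta}\,|\widehat h(u)|^2\,du\big)^{1/2}$, and since $n/q'=\tfrac12(n+\beta)$ this integral is controlled precisely by the second Lemma applied with $p=q'$ (its fractional order being $n(\tfrac1{q'}-\tfrac12)=\tfrac{\beta}2$), namely $\int_{\real^n}|u|^{-\beta}|\widehat h(u)|^2\,du\le C_{q'}\,\|h\|_{L^{q'}}^2$. Combining the three estimates and taking the supremum over $h$ gives $\big[\int_{\real^n}|f(x,\dots,x)|^q\,dx\big]^{2/q}\le c_{\alpha_1,\dots,\alpha_m}\,C_{q'}\,\Lambda(f;\alpha_1,\dots,\alpha_m)$, which is \eqref{eq2}.

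There remains the bookkeeping of the constant, i.e.\ to check $c_{\alpha_1,\dots,\alpha_m}\,C_{q'}=F_\alpha$. From the convolution computation one reads off $c_{\alpha_1,\dots,\alpha_m}=\pi^{(m-1)n/2}\,\big[\Gamma(\tfrac{\beta}2)/\Gamma(\tfrac{n-\beta}2)\big]\prod_k\big[\Gamma(\tfrac{n-\alpha_k}2)/\Gamma(\tfrac{\alpha_k}2)\big]$, while inserting $p=q'$ into the formula for $C_p$ in the second Lemma gives $C_{q'}=\pi^{\beta/2}\,\big[\Gamma(\tfrac{n-\beta}2)/\Gamma(\tfrac{n+\beta}2)\big]\big[\Gamma(n)/\Gamma(\tfrac n2)\big]^{\beta/n}$. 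The factors $\Gamma(\tfrac{n-\beta}2)$ cancel, the powers of $\pi$ add to $\pi^{(m-1)n/2+\beta/2}=\pi^{\alpha/2}$, and $\Gamma(\tfrac{n+\beta}2)=\Gamma\big(n-\tfrac{mn-\alpha}2\big)$, $\tfrac{\beta}n=\tfrac{\alpha-(m-1)n}{n}$, so the product collapses exactly to the stated $F_\alpha$.

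Finally, sharpness and the form of the extremals come from reversing the two inequalities used. Equality in the Cauchy--Schwarz step forces $\widehat f(\xi)=\lambda\,\prod_k|\xi_k|^{-\alpha_k}\,\widehat h(\xi_1+\cdots+\xi_m)$ for a scalar profile $h$, and equality in the second Lemma forces $h$ to be one of Lieb's sharp Hardy--Littlewood--Sobolev extremizers \cite{Lieb}; inverting the Fourier transform --- the factor depending on $\xi_1+\cdots+\xi_m$ reconstitutes the profile along the diagonal, and the inverse transforms of the remaining powers of $|\xi_k|$ supply a Riesz kernel in each slot --- and then invoking the conformal covariance identities for Riesz potentials recovers the displayed extremal family, unique up to conformal automorphism of the profile $|1+w^2|^{-n/q}$. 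The step I expect to require the most care is the bookkeeping rather than any point of principle: verifying absolute convergence of every intermediate convolution (which is exactly where the hypotheses $0<\alpha_k<n$ and $(m-1)n<\alpha<mn$ are consumed) and collating the Gamma factors so that $c_{\alpha_1,\dots,\alpha_m}\,C_{q'}$ matches $F_\alpha$ on the nose; the identification of the extremizers then reduces to the known rigidity in Lieb's theorem, the diagonal transport being the only new ingredient.
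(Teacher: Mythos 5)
Your argument is correct and reaches the right constant, but you take the Fourier-dual route rather than the one the paper actually follows. The paper works on the physical side: it rewrites $\Lambda(f;\alpha_1,\dots,\alpha_m)$ as an $L^2$ norm $\|g\|_2^2$ with $g=\prod_k(-\Delta_k/4\pi^2)^{\alpha_k/4}f$, represents the diagonal trace as the multilinear Riesz transform of $g$, dualizes to an $L^{q'}(\real^n)\to L^2(\real^{mn})$ bound, integrates out the $y_k$'s using the semigroup property of Riesz potentials to collapse $\prod_k|y_k-x|^{-(n-\alpha_k/2)}|y_k-w|^{-(n-\alpha_k/2)}$ into $|x-w|^{-(mn-\alpha)}$, and finishes with the classical convolution form of sharp HLS. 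You instead stay entirely on the frequency side: Fourier inversion on the diagonal, a weighted Cauchy--Schwarz splitting (which, as you note, is tight and is literally the adjoint computation), evaluation of the fibre integral as the $m$-fold Riesz composition $|\cdot|^{-\alpha_1}*\cdots*|\cdot|^{-\alpha_m}=c\,|\cdot|^{-\beta}$, and then the Pitt-form $L^2$ restatement of HLS --- exactly the ``square-integrable paradigm'' Lemma the paper states as motivation but never actually invokes in the Theorem~\ref{thm2} proof. These are mutually dual descriptions of the same $TT^*$ computation (the physical-side identity $\int\prod|y_k-x|^{-(n-\alpha_k/2)}|y_k-w|^{-(n-\alpha_k/2)}dy=c|x-w|^{-(mn-\alpha)}$ is the Fourier transform of your Riesz composition), so the two proofs are mathematically equivalent; yours has the merit of making the invocation of the motivating Lemma explicit and of keeping the role of the weight $\prod_k|\xi_k|^{-\alpha_k}$ visible, while the paper's keeps the multilinear fractional-integral structure in the foreground. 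Your constant bookkeeping is correct: with $\beta=\alpha-(m-1)n$ one has $n/q'=(n+\beta)/2$, the Riesz-composition constant is $\pi^{(m-1)n/2}\bigl[\Gamma(\tfrac\beta2)/\Gamma(\tfrac{n-\beta}2)\bigr]\prod_k\Gamma(\tfrac{n-\alpha_k}2)/\Gamma(\tfrac{\alpha_k}2)$, and the $\Gamma(\tfrac{n-\beta}2)$ cancellation plus the $\pi$-exponent addition reproduce $F_\alpha$. Your observation that the hypotheses guarantee absolute convergence of every partial convolution is also correct and easy to justify: at the $j$th stage one needs $\sum_{k\le j+1}\alpha_k>jn$, which follows from $\alpha>(m-1)n$ and $\alpha_k<n$. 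The only soft spot is the closing sentence on extremals: inverting the equality case $\widehat f=\lambda\prod_k|\xi_k|^{-\alpha_k}\widehat h(\xi_1+\cdots+\xi_m)$ gives $f=c\int\prod_k|x_k-w|^{-(n-\alpha_k)}h(w)\,dw$ (exponent $n-\alpha_k$), whereas the displayed extremal in the theorem uses exponent $n-\alpha_k/2$; the latter is actually the extremal of the reduced fractional-integral inequality (the function the paper denotes $f$ in that equivalent formulation, i.e.\ $\prod_k(-\Delta_k/4\pi^2)^{\alpha_k/4}f$), so if you claim your inversion ``recovers the displayed extremal family'' you are silently matching a different normalization --- worth a remark rather than a gloss.
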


\begin{rem}
If $m=1$, the sharp forms of the classical Pitt's inequality and the 
Hardy-Littlewood-Sobolev inequality are recovered.
By choosing $f$ to be a product function, special cases of the general 
Pitt's inequality and the Stein-Weiss theorem can be obtained without 
sharp constants (see Appendix in \cite{Beckner-PAMS08}).
For both inequalities, the term on the left-hand side can be viewed as 
``restriction to the diagonal.'' 
Simple iteration allows extension of these estimates to diagonal restriction
traces on subblocks of the manifold $\real^{mn}$.
\end{rem}

\begin{proof}[Proof of Theorem~\ref{thm1}] 
Inequality \eqref{eq1} is equivalent to the multilinear fractional integral 
inequality: 
\begin{gather*}
\int_{\real^n} \Big| \int_{\real^{mn}} \prod_{k=1}^m |x-y_k|^{-(n-\alpha_k/2)}
f(y_1,\ldots,y_m)\,dy\Big|^2\ |x|^{-\beta}\,dx\\
\noalign{\vskip6pt}
\hskip1in \le D_\beta \int_{\real^{mn}} |f(x_1,\ldots,x_m)|^2\,dx\\
\noalign{\vskip6pt}
C_\beta = \pi^{-mn+\alpha} \prod_{k=1}^m 
\left[ \frac{\Gamma \big(\frac{2n-\alpha_k}4\big)}{\Gamma\big(\frac{\alpha_k}4
\big)}\right]^2\ D_\beta\ .
\end{gather*}
By $L^2$ duality this is equivalent to:
\begin{equation*}
\int_{\real^{mn}} \Big| \int_{\real^n} \prod_{k=1}^m |y_k-x|^{-(n-\alpha_k/2)}
|x|^{-\beta/2} g(x)\,dx\Big|^2\,dy 
\le D_\beta \int_{\real^n} |g(x)|^2\,dx\ .
\end{equation*}
Using rearrangement  arguments this inequality is reduced to non-negative 
radial decreasing functions $g(x)$.
The left-hand side becomes 
\begin{equation*}
\int_{\real^n\times\real^n\times\real^{mn}} \mkern-24mu
g(x)|x|^{-\beta/2} \prod_{k=1}^m 
|y_k-x|^{-(n-\alpha_k/2)|} \prod_{k=1}^m |y_k-w|^{-(n-\alpha_k/2)} 
|w|^{-\beta/2} g(w)dx\,dw\,dy .
\end{equation*}
Integrating out the $y_k$ variables 
\begin{gather*}
\int_{\real^n\times\real^n} g(x) |x|^{-\beta/2} |x-w|^{-mn+\alpha} 
|w|^{-\beta/2} g(w)\,dx\,dw 
\le E_\beta \int_{\real^n} |g(x)|^2\,dx\\
\noalign{\vskip6pt}
C_\beta = \pi^{-mn/2+\alpha} \prod_{k=1}^m 
\Gamma \Big(\frac{n-\alpha_k}2\Big)\Big/ \Gamma\Big(\frac{\alpha_k}2\Big)
\ E_\beta\ .
\end{gather*}
Since $mn - \alpha = n-\beta$, this becomes the classical Stein-Weiss 
fractional integral: 
\begin{equation*}
\int_{\real^n\times \real^n} g(x) |x|^{-\beta/2} |x-w|^{-(n-\beta)} 
|w|^{-\beta/2} g(w)\, dx\,dw 
\le E_\beta \int_{\real^n} |g(x)|^2\,dx
\end{equation*}
with 
\begin{equation*}
E_\beta = \pi^{n/2} 
\left[\frac{\Gamma \big(\frac{\beta}2\big)}{\Gamma\big(\frac{n-\beta}2\big)}
\right] 
\left[\frac{\Gamma\big(\frac{n-\beta}4\big)}{\Gamma\big(\frac{n+\beta}4\big)}
\right]^2\ .
\end{equation*}
See Theorem~3 in \cite{Beckner-Forum08} and also \cite{Beckner-PAMS95}.
Then 
\begin{equation*}
C_\beta= \pi^{-(m-1)n+\alpha} \prod_{k=1}^m 
\frac{\Gamma\big(\frac{n-\alpha_k}2\big)}{\Gamma\big(\frac{\alpha_k}2\big)}
\left[\frac{\Gamma\big(\frac{\beta}2\big)}{\Gamma\big(\frac{n-\beta}2\big)}
\right]
\left[\frac{\Gamma\big(\frac{n-\beta}4\big)}{\Gamma\big(\frac{n+\beta}4\big)}
\right]^2
\end{equation*}
with $mn-\alpha = n-\beta$.
\renewcommand{\qed}{}
\end{proof}

\begin{rems}
For notation, the Lebesgue measure $dx$ incorporates the dimension of the 
underlying domain. 
Observe that as $\beta\to0$ the constant $C_\beta$ is unbounded so that 
the requirement $\beta >0$ is strict. 
This reflects that the multilinear estimate is fully at the $L^2$ 
spectral level where one would not expect homogeneous Sobolev embedding 
without weights. 
The constant $C_\beta$ is sharp and no extremals exist which follows from 
reduction to the one variable case in $\real^n$. 
Note that if $f$ is a product function, for example $f(x) = \prod u(x_k)$, 
then the inequality reduces to the case of fractional Sobolev embedding
on $\real^n$ where the index is an even integer and one can set $\beta=0$.
But the calculation here provides no information on the constant in that case.
For large $m$, some $\alpha_k$ must approach $n$ and so again 
the constant will be unbounded. 
Iterative methods are not effective which indicates that the results are 
clearly multidimensional. 
The appearance of the factors $\Gamma (\alpha_k/2)$ in the denominator 
of the constant $C_\beta$ raises the question of how this constant will 
behave as one of the $\alpha_k$'s goes to zero. 
Observe that since $\alpha_k = \sum' (n-\alpha_\ell)+\beta$ is represented 
as a sum of positive terms with the sum taken for $\ell\ne k$, each term 
must also approach zero and in such case $C_\beta\to\infty$. 
\end{rems}

\begin{proof}[Proof of Theorem~\ref{thm2}] 
Inequality \eqref{eq2} is equivalent to the multilinear fractional integral 
inequality: 
\begin{gather*}
\bigg[ \int_{\real^n} \Big| \int_{\real^{mn}} \prod_{k=1}^m 
|x-y_k|^{-(n-\alpha_k/2)} f(y_1,\ldots,y_m) \,dy\Big|^q dx\bigg]^{2/q}\\
\le G_\alpha \int_{\real^{mn}} |f(x_1,\ldots,x_m)|^2\, dx\\
F_\alpha = \pi^{-mn+\alpha} \prod_{k=1}^m 
\bigg[ \frac{\Gamma (\frac{2n-\alpha_k}4)}{\Gamma (\frac{\alpha_k}4)}
\bigg]^2 G_\alpha
\end{gather*}
By duality this is equivalent to: 
\begin{equation*}
\int_{\real^{mn}} \Big| \int_{\real^n} \prod_{k=1}^m 
|y_k -x|^{-(n-\alpha_k/2)} g(x)\,dx\Big|^2\,dy 
\le G_\alpha\bigg[ \int_{\real^n} |g(x)|^p\,dx\bigg]^{2/p}
\end{equation*}
where $\frac1p + \frac1q =1$, $1<p<2$ and $mn-\alpha = 2n/q$.
As with the calculation for Theorem~\ref{thm1}, the left-hand side becomes
\begin{equation*}
\int_{\real^n\times \real^n\times\real^{mn}} \mkern-18mu
g(x) \prod_{k=1}^m |y_k -x|^{-(n-\alpha_k/2)} 
\prod_{k=1}^m |y_k-w|^{-(n-\alpha_k/2)} g(w)\, dx\,dw\, dy
\end{equation*}
Integrating out the $y_k$ variables 
\begin{gather*}
\int_{\real^n \times \real^n} g(x) |x-w|^{-mn+\alpha} g(w)\,dx\,dw 
\le H_\alpha \bigg[ \int_{\real^n} |g(x)|^p\,dx\bigg]^{2/p} \\
F_\alpha = \pi^{-mn/2 +\alpha} \prod_{k=1}^m 
\Gamma\left(\frac{n-\alpha_k}2\right) \Big/ 
\Gamma\left(\frac{\alpha_k}2\right) H_\alpha
\end{gather*}
Since $mn-\alpha = 2n/q$, this becomes the classical 
Hardy-Littlewood-Sobolev inequality:
\begin{equation*}
\int_{\real^n\times\real^n} g(x) |x-w|^{-2n/q} g(w)\,dx\,dw 
\le H_\alpha\bigg[ \int_{\real^n} |g(x)|^p \,dx\bigg]^{2/p}
\end{equation*}
with 
\begin{equation*}
H_\alpha = \pi^{n/\alpha} 
\frac{\Gamma \Big(\frac{n}p - \frac{n}2\Big)}{\Gamma\Big(\frac{n}p\Big)} 
\left[\frac{\Gamma (n)}{\Gamma \Big(\frac{n}p\Big)} \right]^{2/p-1}
\end{equation*}
Then 
\begin{equation*}
F_\alpha = \pi^{\alpha/2} \prod_{k=1}^m
\left[\frac{\Gamma\big(\frac{n-\alpha_k}2\big)}{\Gamma\big(\frac{\alpha_k}2\big)
}\right]
\left[\frac{\Gamma\big(\frac{\alpha-(m-1)n}2\big)}
{\Gamma\big(n-\frac{mn-\alpha}2 \big)}\right]
\left[\frac{\Gamma (n)}{\Gamma \big(\frac{n}2\big)}\right]
^{\frac{\alpha-(m-1)n}n}\ .
\end{equation*}
Extremal functions are determined by the classical inequality.
\end{proof}

The Stein-Weiss lemma (see Appendix in \cite{Beckner-PAMS08}) allows the trace 
inequality of Theorem~\ref{thm1} to be formulated in more general terms:

\begin{thm}[Stein-Weiss trace]\label{thm3} 
For $f\in\S(\real^{mn})$ consider 
\begin{equation*} 
F(x) = |x|^{-\beta/2} \int_{\real^{mn}} \prod_{k=1}^m K_k (x,y_k) 
f(y_1,\ldots,y_m)\,dy_1 ,\ldots, dy_m
\end{equation*}
where $\{K_k(x,y)\}$ is a family of non-negative kernels defined 
on $\real^n\times\real^n$, each kernel being continuous on any domain 
that excludes the diagonal, homogeneous of degree $-\sigma_k$ 
${(0< \sigma_k <n)}$, $K_k (\delta u,\delta v) = \delta^{-\sigma_k} K_k (u,v)$, 
and $K_k (Ru,Rv) = K_k(u,v)$ of any $R\in SO(n)$; $0<\beta <n$ with 
$2\sigma+\beta - mn = n$ where $\sigma =\sum \sigma_k$, 
Then 
\begin{gather}
\int_{\real^n} |F(x)|^2\,dx \le A_\sigma \int_{\real^{mn}} 
|f(y_1,\ldots, y_m)|^2\,dy \label{eq3}\\
\noalign{\vskip6pt}
A_\sigma = \int_{\real^n} |x|^{-\frac{\beta}2 - \frac{n}2} 
\prod_k \bigg[ \int_{\real^n} K_k (x,y) K_k (\xi_1,y)\,dy\bigg]\,dx
\notag
\end{gather} 
with $\xi_1$ a unit vector in the first coordinate direction
\end{thm}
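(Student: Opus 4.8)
The plan is to follow the architecture of the proof of Theorem~\ref{thm1}: pass to the dual (adjoint) operator, collapse the multilinear product by integrating out the intermediate variables, and recognize the surviving bilinear form as an instance of the Stein-Weiss lemma. Write $F=Tf$, so that $T\colon L^2(\real^{mn})\to L^2(\real^n)$ is the operator displayed in the statement; since $\|T\|=\|T^*\|$, inequality~\eqref{eq3} is equivalent to
\begin{equation*}
\int_{\real^{mn}} \Big| \int_{\real^n} \prod_{k=1}^m K_k(x,y_k)\,|x|^{-\beta/2}\,g(x)\,dx\Big|^2\,dy_1\cdots dy_m \le A_\sigma \int_{\real^n}|g(x)|^2\,dx .
\end{equation*}
Expanding the square turns the left-hand side into the bilinear form
\begin{equation*}
\int_{\real^n\times\real^n}|x|^{-\beta/2}\,|w|^{-\beta/2}\,g(x)\,\overline{g(w)}\,\Big[\int_{\real^{mn}}\prod_{k=1}^m K_k(x,y_k)\,K_k(w,y_k)\,dy\Big]dx\,dw .
\end{equation*}

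First I would pass to $g\ge 0$ (legitimate since the $K_k$ are non-negative, so the modulus of the form only grows when $g$ is replaced by $|g|$), which makes Tonelli applicable and lets the inner integral factor: with
\begin{equation*}
\Phi_k(x,w):=\int_{\real^n}K_k(x,y)\,K_k(w,y)\,dy ,
\end{equation*}
the form becomes $\langle \mathcal T g,g\rangle$, where $\mathcal T=TT^*$ is the integral operator on $L^2(\real^n)$ with kernel $\mathcal K(x,w)=|x|^{-\beta/2}\,\big(\prod_{k}\Phi_k(x,w)\big)\,|w|^{-\beta/2}$. The substitution $y\mapsto\delta y$ together with $K_k(\delta u,\delta v)=\delta^{-\sigma_k}K_k(u,v)$ gives $\Phi_k(\delta x,\delta w)=\delta^{\,n-2\sigma_k}\Phi_k(x,w)$, while the $SO(n)$-invariance of $K_k$ passes to $\Phi_k$, and plainly $\Phi_k(x,w)=\Phi_k(w,x)$. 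Hence $\mathcal K$ is non-negative, symmetric, $SO(n)$-invariant, and homogeneous of degree $-\beta+\sum_k\bigl(n-2\sigma_k\bigr)=mn-2\sigma-\beta$, which equals $-n$ \emph{exactly because} $2\sigma+\beta-mn=n$. This degree-$(-n)$ homogeneity is the structural fact the whole argument rests on, and it is the single place the dimensional balance hypothesis is consumed.

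With these properties in hand the conclusion is precisely the Stein-Weiss lemma (Appendix of \cite{Beckner-PAMS08}): a positive integral operator on $L^2(\real^n)$ whose kernel is non-negative, $SO(n)$-invariant and homogeneous of degree $-n$ has operator norm equal to the value of its quadratic form at the formal eigenfunction $|x|^{-n/2}$. Concretely, homogeneity and rotation-invariance give $\int_{\real^n}\mathcal K(x,w)\,|w|^{-n/2}\,dw=\lambda\,|x|^{-n/2}$ with $\lambda=\int_{\real^n}\mathcal K(\xi_1,w)\,|w|^{-n/2}\,dw$, and positivity of $\mathcal T$ forces $\|\mathcal T\|=\lambda$; since $|\xi_1|=1$ and $\mathcal K$ is symmetric,
\begin{equation*}
\lambda=\int_{\real^n}|w|^{-\beta/2-n/2}\prod_{k=1}^m\Big[\int_{\real^n}K_k(w,y)\,K_k(\xi_1,y)\,dy\Big]dw=A_\sigma .
\end{equation*}
Thus $\langle\mathcal T g,g\rangle\le A_\sigma\|g\|_{L^2(\real^n)}^2$, which is the dual form of \eqref{eq3}; one also reads off from $|x|^{-n/2}\notin L^2(\real^n)$ that the constant is attained only in a limiting sense, so no extremal exists, exactly as in Theorem~\ref{thm1}.

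The soft part of this — duality, factorization, homogeneity bookkeeping — is routine; the step I expect to require care is the convergence question: one must know that each $\Phi_k$ is finite off the diagonal and that the testing integral $A_\sigma$ converges, that is, that the hypotheses on the $\sigma_k$ and on $\beta$ genuinely place $\mathcal K$ inside the range of applicability of the Stein-Weiss lemma (for a genuinely homogeneous kernel the homogeneity pins the decay, so the real point is the size of $2\sigma_k$ relative to $n$ at infinity and the strength of the diagonal singularity). Once $A_\sigma<\infty$, the argument closes, and specializing $K_k(x,y)=|x-y|^{-(n-\alpha_k/2)}$ — for which $\Phi_k(x,w)$ is a beta integral equal to a constant multiple of $|x-w|^{-(n-\alpha_k)}$ — recovers Theorem~\ref{thm1} with its constant $C_\beta$.
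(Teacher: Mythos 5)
Your proposal is correct and follows essentially the same route the paper takes: pass to the adjoint, integrate out the $y_k$ to produce the composite kernel $\widehat K(x,w)=|x|^{-\beta/2}|w|^{-\beta/2}\prod_k\int K_k(x,y)K_k(w,y)\,dy$, verify it is non-negative, symmetric, $SO(n)$-invariant and homogeneous of degree $-n$ (the constraint $2\sigma+\beta-mn=n$ entering exactly here), and then invoke the Stein-Weiss lemma to identify the operator norm with the testing integral $A_\sigma$. The paper's proof is only a two-sentence gesture at this (``apply the argument used for the proof of Theorem~\ref{thm1} and observe that the kernel \dots satisfies the requirements of the Stein-Weiss lemma''), so what you have done is simply fill in the same argument in full; in particular your closing remark that the only real issue is finiteness of $A_\sigma$ matches the paper's own caveat verbatim.
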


\begin{proof} 
Apply the argument used for the proof of Theorem~\ref{thm1} and observe 
that the kernel 
\begin{equation*} 
\widehat K (x,w) = |x|^{-\beta/2} |w|^{-\beta/2} 
\prod_k \int_{\real^n} K_k (x,y) K_k (w,y)\,dy
\end{equation*}
satisfies the requirements of the Stein-Weiss lemma. 
The requirement for this trace estimate to hold is that $A_\sigma$ 
is finite. 
\end{proof}

In looking to understand how fractional smoothness controls size at the spectral 
level, and taking into account the dual representation given by the Fourier 
transform in balancing differentiability versus decay at infinity, the Stein-Weiss 
integral expresses a realization of the {\em uncertainty principle} 
\begin{equation}
c\int_{\real^n} |f|^2\,dx 
\le \int_{\real^n} |(-\Delta/4\pi^2)^{\alpha/4} |x|^{\alpha/2} f(x) |^2\,dx\ .
\end{equation}
An asymptotic argument gives directly the classical inequality. 
More broadly, this principle extends to include restriction to a $k$-dimensional 
linear sub-variety
\begin{equation}\label{eq5bis}
d\int_{\real^k} |\R f|^2\,dx 
\le \int_{\real^n} \Big| (-\Delta/4\pi^2)^{\alpha/4} |x|^{\beta/2} f(x)\Big|^2\,dx
\end{equation}
where $n-\alpha = k-\beta$, $n\ge k>\beta >0$ and 
$$d= \pi^{-\alpha}\ \frac{\Gamma(\frac{\alpha}2)}{\Gamma(\frac{\beta}2)} 
\left[\frac{\Gamma (\frac{k+\beta}4)}{\Gamma(\frac{k-\beta}4)}\right]^2\ .$$
Using the principle of the Stein-Weiss trace formulated above, multilinear trace 
integral embedding estimates described here can be extended to include 
iterated multiplication of fractional powers with successive alternation between 
the function side and the Fourier transform side where the optimal constants will be
given as closed-form integrals. 
Though the integral kernel does not have translation invariance on $\real^n$, 
dilation invariance transforms the problem to repeated convolution 
integrals on the multiplicative group $\real_+$ (see for example the 
section on iterated Stein-Weiss integrals in \cite{Beckner-Forum08}) 
which facilitates the closed-form computation of sharp constants.

To illustrate this framework and outline the strategy needed to treat iterated 
multilinear embedding forms 
$$\int_{\real^n\times\cdots\times \real^n} \Big| \prod |x_k|^{\rho_k/2} 
\prod_j \Big[ \prod_k (-\Delta_k/4)^{\alpha_{jk}/4} |x_k|^{\beta_{jk}/2}\Big] f\Big|^2 
dx_1\cdots dx_m$$
the following theorem includes the critical steps:

\begin{thm}[iterated Stein-Weiss]
\label{iteratedSW}
For $f\in \S(\real^{mn})$, $0<\alpha_k <n$, $\alpha = \sum \alpha_k$, 
$0<\beta_k <n$, $\beta = \sum \beta_k$, $0<\rho_k <n$, $\rho = \sum \rho_k$ 
and $n-\beta -\rho = mn-\alpha$ with $0<\beta+\rho <n$
\begin{gather*}
\int_{\real^n}\! |f(\underbrace{x,\cdots,x}_{\text{$m$ slots}})|^2\,dx 
\le C\!  \int_{\real^n\times\cdots\times\real^n} 
\Big| \prod_{k=1}^m |x_k|^{\rho_k/2} (-\Delta_k/4\pi)^{\alpha_k/4} |x_k|^{\beta_k/2} 
f\Big|^2 dx_1\cdots dx_m\\
\noalign{\vskip6pt}
C = \pi^{-mn+\alpha} \prod_{k=1}^m 
\bigg[ \frac{\Gamma (\frac{2n-\alpha_k}4)}{\Gamma (\frac{\alpha_k}4)}\bigg]^2
\ \frac{2^{-mn+\alpha/2}}{\sigma(S^{n-1})}\ \int_\real H(x)\,dx
\end{gather*}
with $H$ defined in the proof below.
The constant $C$ is sharp and no extremals exist for this inequality.
\end{thm}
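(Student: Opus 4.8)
The plan is to follow the template of Theorem~\ref{thm1}: invert the operator on the right-hand side and restrict to the diagonal, reducing the estimate to the operator norm of an explicit multilinear fractional integral $S$ between $L^{2}$ spaces; pass to $SS^{*}$, whose kernel $\widehat K(x,w)$ is non-negative, $SO(n)$-invariant and homogeneous of degree $-n$; and finally transport the associated radial bilinear form to the multiplicative group $\real_{+}$, where it becomes convolution against a non-negative even kernel $H$ on $\real$, so that the sharp norm is the $L^{1}$ norm $\int_{\real}H$. The passage to $\real_{+}$ together with the angular integration produces the numerical factors $2^{-mn+\alpha/2}$ and $1/\sigma(S^{n-1})$; compare the iterated Stein-Weiss calculation in \cite{Beckner-Forum08}.

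First I would put $g=\prod_{k}|x_{k}|^{\rho_{k}/2}(-\Delta_{k}/4\pi^{2})^{\alpha_{k}/4}|x_{k}|^{\beta_{k}/2}f$, so the right side is $\|g\|_{L^{2}(\real^{mn})}^{2}$. Multiplication by $|x_{k}|^{\pm\beta_{k}/2}$ is trivially inverted, and by the Fourier identity quoted above $(-\Delta_{k}/4\pi^{2})^{-\alpha_{k}/4}$ is convolution in the $k$-th variable against $c_{k}\,|x_{k}|^{-(n-\alpha_{k}/2)}$ with $c_{k}=\pi^{-n/2+\alpha_{k}/2}\Gamma(\tfrac{2n-\alpha_{k}}{4})/\Gamma(\tfrac{\alpha_{k}}{4})$; restricting to the diagonal gives
\begin{equation*}
f(\underbrace{x,\dots,x}_{m})=\Big(\prod_{k}c_{k}\Big)|x|^{-\beta/2}\int_{\real^{mn}}\prod_{k=1}^{m}|x-z_{k}|^{-(n-\alpha_{k}/2)}|z_{k}|^{-\rho_{k}/2}\,g(z)\,dz ,
\end{equation*}
and $\prod_{k}c_{k}^{2}=\pi^{-mn+\alpha}\prod_{k}[\Gamma(\tfrac{2n-\alpha_{k}}{4})/\Gamma(\tfrac{\alpha_{k}}{4})]^{2}$ is exactly the first two factors of $C$. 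Thus the inequality is equivalent to a bound for $S:g\mapsto|x|^{-\beta/2}\int_{\real^{mn}}\prod_{k}|x-z_{k}|^{-(n-\alpha_{k}/2)}|z_{k}|^{-\rho_{k}/2}g(z)\,dz$ from $L^{2}(\real^{mn})$ to $L^{2}(\real^{n})$; since $\|S\|^{2}=\|SS^{*}\|$, integrating out the $z_{k}$ shows $SS^{*}$ has kernel
\begin{equation*}
\widehat K(x,w)=|x|^{-\beta/2}|w|^{-\beta/2}\prod_{k=1}^{m}\int_{\real^{n}}|x-z|^{-(n-\alpha_{k}/2)}|z|^{-\rho_{k}}|w-z|^{-(n-\alpha_{k}/2)}\,dz .
\end{equation*}
The relation $n-\beta-\rho=mn-\alpha$ is precisely what makes each inner integral converge at infinity and makes $\widehat K$ homogeneous of degree $-n$; together with $0<\alpha_{k},\beta_{k},\rho_{k}<n$ and $0<\beta+\rho<n$ it also forces $\widehat K$ to be locally integrable off the diagonal (its diagonal singularity is of order $|x|^{-\beta-\rho}|x-w|^{-(n-\beta-\rho)}$) and, after the reduction below, $H\in L^{1}(\real)$. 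Non-negativity and $SO(n)$-invariance of $\widehat K$ are immediate.

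Next I would reduce to radial $h$ (by the rearrangement argument of Theorem~\ref{thm1} applied to the equivalent fractional-integral inequality, or because the non-negative $SO(n)$-invariant kernel $\widehat K$ makes $SS^{*}$ attain its norm on radial functions) and pass to $\real_{+}$ by writing $h(x)=|x|^{-n/2}\phi(\log|x|)$, $x=e^{t}\omega$, $z_{k}=e^{p_{k}}\zeta_{k}$, so that $\|h\|_{L^{2}(\real^{n})}^{2}=\sigma(S^{n-1})\|\phi\|_{L^{2}(\real)}^{2}$. From $|e^{t}\omega-e^{u}\eta|=\sqrt{2}\,e^{(t+u)/2}\bigl(\cosh(t-u)-\omega\cdot\eta\bigr)^{1/2}$, each of the $2m$ Euclidean distance factors in $\widehat K$ carries a factor $(\sqrt{2})^{-(n-\alpha_{k}/2)}$, which collect to $\prod_{k}(\sqrt{2})^{-2(n-\alpha_{k}/2)}=2^{-mn+\alpha/2}$; the remaining exponential factors combine with the power weights and Jacobians to make the form translation invariant in the logarithmic radial variable $v$, and the sphere integrations leave
\begin{equation*}
\langle h,SS^{*}h\rangle_{L^{2}(\real^{n})}=2^{-mn+\alpha/2}\int_{\real}H(v)\Big(\int_{\real}\phi(t)\phi(t+v)\,dt\Big)dv ,
\end{equation*}
where $H$ is the (finite) integral over $S^{n-1}\times S^{n-1}\times(\real_{+}\times S^{n-1})^{m}$ of the residual $\bigl(\cosh(\,\cdot\,)-\omega\cdot\zeta\bigr)$-type factors, a non-negative even function of $v$ alone. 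A convolution quadratic form on $L^{2}(\real)$ with non-negative kernel $H$ has bound $\widehat H(0)=\int_{\real}H$, hence $\|SS^{*}\|=\tfrac{2^{-mn+\alpha/2}}{\sigma(S^{n-1})}\int_{\real}H$ and $C=\pi^{-mn+\alpha}\prod_{k}[\Gamma(\tfrac{2n-\alpha_{k}}{4})/\Gamma(\tfrac{\alpha_{k}}{4})]^{2}\cdot\tfrac{2^{-mn+\alpha/2}}{\sigma(S^{n-1})}\int_{\real}H$, as claimed.

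Every reduction above preserves the optimal constant, and the last convolution bound is saturated along any sequence $\phi_{\varepsilon}$ whose Fourier transforms form an approximate identity at the origin, so $C$ is sharp; since $\widehat K>0$ forces $H>0$ on all of $\real$ and hence $|\widehat H(\tau)|<\widehat H(0)$ for $\tau\neq0$, no $L^{2}$ function can realize equality and there are no extremals (the same follows by reduction to the one-variable Stein-Weiss inequality, which has no extremals). The step I expect to be the real work is the transport to $\real_{+}$: organizing the $2m$ distance factors and the $m+2$ power weights so that the surviving integrand is manifestly one function $H$ of the single variable $v$, checking that the accumulated constants are exactly $2^{-mn+\alpha/2}/\sigma(S^{n-1})$, and confirming $H\in L^{1}(\real)$ from the stated exponent constraints. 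The positive-definiteness input (that $\widehat K$, hence $H$, is non-negative, so the operator norm is literally $\int_{\real}H$) is the conceptual point behind sharpness, but it is immediate from the explicit kernel.
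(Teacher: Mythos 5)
Your proposal takes essentially the same route as the paper: invert the Riesz operator and restrict to the diagonal, pass by $L^2$ duality to the $SS^*$ bilinear form, reduce to radial functions (by rearrangement or the positive $SO(n)$-invariant kernel), transport the degree $-n$ homogeneous kernel to the multiplicative group $\real_+$ so that it becomes convolution against a non-negative $H\in L^1(\real)$, and bound the quadratic form by $\int_\real H$ with sharpness and absence of extremals following from $H>0$. Your explicit accounting of the factors $\pi^{-mn+\alpha}\prod_k\bigl[\Gamma(\tfrac{2n-\alpha_k}{4})/\Gamma(\tfrac{\alpha_k}{4})\bigr]^2$, $2^{-mn+\alpha/2}$ from the $2m$ rewrites $|e^t\omega-e^u\eta|^2=2e^{t+u}(\cosh(t-u)-\omega\cdot\eta)$, and $1/\sigma(S^{n-1})$ from radial $L^2$ normalization matches the paper's constant $C$, and you correctly read $(-\Delta_k/4\pi)$ in the theorem statement as the misprint for $(-\Delta_k/4\pi^2)$.
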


\begin{proof}
Following the argument given for Theorem~\ref{thm1}, and using 
$L^2$ duality, the functional to estimate for $h\in L^2 (\real^n)$ is 
$$\int_{\real^n\times\real^n\times \real^{mn}}\mkern-24mu 
 h(x) |x|^{-\beta/2} 
\prod_{k=1}^m \Big[ |y_k|^{-\rho_k} \Big( |y_k-x|\, |y_k-w|\Big)^{-(n-\alpha_k/2)}\Big] 
|w|^{-\beta/2} h(w) \,dx\,dw\,dy$$
Observe that either by the nature of the Stein-Weiss kernel or by applying 
the Brascamp-Lieb-Luttinger rearrangement theorem \cite{BLL},
 the function $h$ can be 
taken to be radial. 
Let $y_k = |y_k|\xi_k$, $x= |x|\eta_1$, $ w = |w|\eta_2$. 
By transferring the analysis first to the multiplicative group $\real_+$ and then to 
the real line, the functional integral above is equivalent to the form 
$$\int_{\real\times\real} g(x) H(x-y) g(y)\,dx\,dy$$
where $g(x) = h(e^x) e^{nx/2}$ and 
$$H(x) = \int_{S^{n-1}\times S^{n-1}} \prod_k B_k (x,n_1,n_2)\,d\eta_1 \,d\eta_2$$
with 
\begin{equation*}
\begin{split}
&B_k (x,\eta_1,\eta_2) \\
\noalign{\vskip6pt}
&\quad 
= \int_{\real \times S^{n-1}}\mkern-40mu 
 e^{-(\rho_k-\alpha_k)t} 
\left[ \Big( \cosh \Big(\frac{x}2 -t\Big) - \xi\cdot\eta_1\Big)
\Big( \cosh \Big( \frac{x}2 +t\Big)-\xi \cdot\eta_2\Big)\right]^{-(n/2 - \alpha_k/4)}
\mkern-24mu dt\,d\xi
\end{split}
\end{equation*}
Differentials on $S^{n-1}$ correspond to standard surface measure. 
Then by Young's inequality, the constant for bounding the above form 
in terms in $(\|h\|_2)^2$ is 
$$\frac1{\sigma (S^{n-1})} \int_\real H(x)\,dx$$
which provides the sharp value for $C$ in Theorem~\ref{iteratedSW}.
\end{proof}

Bessel potentials provide a framework that extends the result of 
Theorem~\ref{thm2} in the sense of Gagliardo-Nirenberg estimates.
Define 
\begin{gather*}
\Lambda_* (f;\alpha_1,\ldots,\alpha_m) 
= \int_{\real^n\times\cdots \times\real^n} 
\Big| \prod_{k=1}^m (1-\Delta_k)^{\alpha_k/4}\, f\Big|^2 
dx_1 ,\ldots,dx_m\\
\noalign{\vskip6pt}
= \int_{\real^n\times\cdots\times\real^n} \prod_{k=1}^m 
(1+|\xi_k|^2)^{\alpha_k/2}  |\,\widehat f\,|^2 \, 
d\xi_1,\ldots,d\xi_m
\end{gather*}
Bessel potentials are defined by 
\begin{equation*}
G_\alpha (x) = \frac1{(4\pi)^{\alpha/2} \Gamma (\alpha/2)} 
\int_0^\infty e^{-\pi |x|^2/\delta} \
e^{-\delta/4\pi}\ 
\delta^{-(n-\alpha)/2}\ \frac1{\delta}\, d\delta
\end{equation*}
with the properties: 
\begin{equation*}
G_\alpha (x) \ge 0\ ,\quad 
G_\alpha \in L^1 (\real^n)\ ,\quad 
G_\alpha = \F \left[ (1+4\pi^2 |\xi|^2)^{-\alpha/2}\right]\ ,\qquad 
\alpha >0
\end{equation*}
and for $0<\alpha <n$
\begin{equation*}
G_\alpha (x) = \pi^{-n/2}\ 2^{-\alpha}\ 
\Gamma\Big(\frac{n-\alpha}2\Big) \Big/ \Gamma \Big(\frac{\alpha}2\Big)\  
\ |x|^{-n+\alpha} +o (|x|^{-n+\alpha})\ \text{ as }\ |x|\to 0
\end{equation*}
and 
\begin{gather*}
G_n (x) \simeq - \left[ (4\pi)^{n/2} \Gamma (n/2)\right]^{-1} \ln |x|^2
\ \text{ as }\ |x|\to0\\
|G_\beta (x)| \le \int_{\real^n} (1+4\pi^2 |\xi|^2)^{-\beta/2} \, d\xi
\ \text{ for }\ \beta >n
\end{gather*}
and for $\alpha >0$
$$G_\alpha (x) = O(e^{-\ep |x|}) \ \text{ as }\ |x| \to\infty
\ \text{ for some }\ \ep >0$$
(see Stein \cite{Stein70}, page 132).

\begin{thm}[Gagliardo-Nirenberg inequality]\label{thm4}
For $f\in \S(\real^{mn})$, $0<\alpha_k < n$, $\alpha = \sum \alpha_k$, 
$k= 1,\ldots,m$,
$\frac1q +\frac1p =1$ with $2\le q\le 2n/(mn-\alpha)$ and $mn-\alpha <n$
\begin{gather}
\bigg[\int_{\real^n} |f(\,\underbrace{x,\ldots,x}_{m\text{ slots}}\,)|^q\,dx
\bigg]^{2/q} \le C_{\alpha,q} \Lambda_* (f;\alpha_1,\ldots,\alpha_m) 
\label{eq4}\\
\noalign{\vskip6pt}
\int_{\real^n\times\real^n} h(x) \prod_{k=1}^m G_{\alpha_k} 
(x-w) h(w)\,dx\,dw 
\le C_{\alpha,q} \Big( \|h\|_{L^p(\real^n)}\Big)^2\label{eq5}
\end{gather}
\end{thm}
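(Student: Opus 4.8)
My plan is to follow the template of the proof of Theorem~\ref{thm2}, replacing Riesz potentials by Bessel potentials and exploiting their semigroup structure. The inhomogeneity of the $G_{\alpha_k}$ is exactly what makes the whole subcritical range $2\le q\le 2n/(mn-\alpha)$ accessible; this is the Gagliardo--Nirenberg feature that is absent from Theorem~\ref{thm2}, where $mn-\alpha=2n/q$ pins $q$ to a single critical value.

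First I would recast \eqref{eq4} as a bilinear estimate. Set $g=\prod_{k=1}^m(1-\Delta_k)^{\alpha_k/4}f$, so that $\Lambda_*(f;\alpha_1,\dots,\alpha_m)=\|g\|_{L^2(\real^{mn})}^2$; since $(1-\Delta_k)^{-\alpha_k/4}$ acts in the variable $x_k$ as convolution with the Bessel potential $G_{\alpha_k/2}$, one has $f=\prod_k\bigl(G_{\alpha_k/2}*_k g\bigr)$ and hence
$$f(x,\dots,x)=\int_{\real^{mn}}\prod_{k=1}^m G_{\alpha_k/2}(x-y_k)\,g(y_1,\dots,y_m)\,dy .$$
Thus \eqref{eq4} is an $L^2(\real^{mn})\to L^q(\real^n)$ bound for the multilinear operator with kernel $\prod_k G_{\alpha_k/2}(x-y_k)$; taking adjoints (legitimate since $2\le q<\infty$, $\tfrac1p+\tfrac1q=1$) this is equivalent to
$$\int_{\real^{mn}}\Bigl|\int_{\real^n}\prod_{k=1}^m G_{\alpha_k/2}(x-y_k)\,h(x)\,dx\Bigr|^2 dy\le C_{\alpha,q}\,\|h\|_{L^p(\real^n)}^2 .$$
Expanding the square, the integrations in $y_1,\dots,y_m$ factor over $k$, and by the Bessel semigroup identity $G_{\alpha_k/2}*G_{\alpha_k/2}=G_{\alpha_k}$ (immediate from $\widehat{G_a}\,\widehat{G_b}=\widehat{G_{a+b}}$) the $k$-th factor collapses to $G_{\alpha_k}(x-w)$. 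The left-hand side therefore becomes $\int_{\real^n}\!\int_{\real^n}h(x)\prod_k G_{\alpha_k}(x-w)\,h(w)\,dx\,dw$, which is exactly \eqref{eq5}; so the content of the theorem is the single bilinear inequality \eqref{eq5}.

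Next I would prove \eqref{eq5} by elementary means, since no sharp constant is asserted. Put $K(z)=\prod_{k=1}^m G_{\alpha_k}(z)$. From the recorded properties of Bessel potentials $K\ge0$, $K(z)=O(e^{-\ep|z|})$ as $|z|\to\infty$ for some $\ep>0$, and $K(z)\sim c\,|z|^{-(mn-\alpha)}$ as $|z|\to0$ because $\sum_k(n-\alpha_k)=mn-\alpha$; since $0<mn-\alpha<n$ this gives the global bound $K(z)\le C\,|z|^{-(mn-\alpha)}$ on $\real^n\setminus\{0\}$. For $2<q<2n/(mn-\alpha)$ one has $K\in L^{q/2}(\real^n)$ — the only constraint is integrability at the origin, i.e.\ $(mn-\alpha)\tfrac q2<n$, which holds in this range — so Young's inequality yields $\|K*h\|_{L^q}\le\|K\|_{L^{q/2}}\|h\|_{L^p}$ (the exponents balance because $1-\tfrac1q=\tfrac1p$), and pairing with $h$, after replacing $h$ by $|h|$ (permissible since $K\ge0$), gives \eqref{eq5} with $C_{\alpha,q}=\|K\|_{L^{q/2}}$. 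The case $q=2$ is identical with $K\in L^1$. For the endpoint $q=2n/(mn-\alpha)$, where $K$ just fails to lie in $L^{q/2}$, I would instead use the pointwise domination $K(z)\le C|z|^{-2n/q}$ together with the classical Hardy--Littlewood--Sobolev inequality $\int\!\int h(x)|x-w|^{-2n/q}h(w)\,dx\,dw\le C\|h\|_{L^p}^2$, which applies since $\lambda=2n/q\in(0,n)$ and $\lambda/n+2/p=2$.

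The hard part is the endpoint $q=2n/(mn-\alpha)$: there the product kernel $\prod_k G_{\alpha_k}$ sits exactly on the boundary of $L^{q/2}$, so Young's inequality alone does not suffice, and one must route through the homogeneous Riesz kernel via the domination $K(z)\le C|z|^{-2n/q}$ — available precisely because $mn-\alpha<n$ keeps the singularity of $K$ no worse than $|z|^{-2n/q}$ while the exponential decay controls the tail. I would also remark that, in contrast with Theorems~\ref{thm1}--\ref{thm3}, the constant here is not sharp: $\prod_k G_{\alpha_k}$ is not a single homogeneous kernel, so rearrangement does not isolate an optimal value, and the subcritical range $q<2n/(mn-\alpha)$ is available only because each $G_{\alpha_k}$ is integrable, unlike the pure Riesz potentials of Theorem~\ref{thm2}.
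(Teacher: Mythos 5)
Your proof is correct and follows essentially the same route the paper takes, which is a one-line sketch: apply the method of Theorem~\ref{thm2} (duality reduces \eqref{eq4} to \eqref{eq5} via the semigroup identity $G_{\alpha_k/2}*G_{\alpha_k/2}=G_{\alpha_k}$), use Young's inequality below the critical index, and at $q=2n/(mn-\alpha)$ combine the small-$|z|$ asymptotics $G_{\alpha_k}(z)\sim c\,|z|^{-(n-\alpha_k)}$ with exponential decay at infinity to dominate the kernel by a Riesz potential and invoke Hardy--Littlewood--Sobolev. Your version makes explicit the duality step, the exponent bookkeeping $r=q/2$ in Young, the local integrability condition $(mn-\alpha)q/2<n$, and the global domination $\prod_k G_{\alpha_k}(z)\le C|z|^{-2n/q}$, all of which are left implicit in the paper; the only thing you omit that the paper records (as a remark, not part of the proof) is that the case $p=q=2$ is sharp with $C_{\alpha,2}=\int_{\real^n}\prod_k G_{\alpha_k}$.
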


\begin{proof}
Apply the method used for Theorem~\ref{thm2}.
For $q$ below the critical index, use Young's inequality to obtain 
\eqref{eq5}. 
At the critical index $q_* = 2n/(mn-\alpha)$ use the asymptotic behavior 
of the Bessel potential together with the Hardy-Littlewood-Sobolev 
inequality. 
\end{proof}

\begin{rem}
This argument gives a sharp estimate for the case $p= q=2$.
Then the sharp constant is given by 
\begin{equation*}
C_{\alpha,2} = \int_{\real^n} \prod_{k=1}^m G_{\alpha_k} (x)\,dx\ .
\end{equation*}
For $m=2$ this constant is $(4\pi)^{-n/2}\ \Gamma((\alpha-n)/2)$.
For $m=1$ this theorem reduces to a Gagliardo-Nirenberg inequality 
for the fractional Laplacian if $\alpha > 2n/q$.
Observe that there exists a constant $C$ so that 
\begin{equation*} 
\prod_{k=1}^m (1+|\xi_k|^2)^{\alpha_k/2} \le 
C\bigg[ 1 + \prod_{k=1}^m |\xi_k|^{\alpha_k}\bigg]
\end{equation*}
then 
\begin{equation*} 
\Lambda_* (f;\alpha_1,\ldots,\alpha_m) 
\le C\bigg[ \int_{\real^{mn}} |f(x_1,\ldots,x_m)|^2\,dx 
+ \Lambda (f;\alpha_1 ,\ldots, \alpha_m)\bigg]
\end{equation*}
Using a variational  argument, this Corollary is obtained from 
Theorem~\ref{thm4}.
\end{rem}

\begin{cor}
For $f\in \S (\real^{mn})$, $0< \alpha_k < n$, $\alpha = \sum\alpha_k$, 
$2\le q< 2n/(mn-\alpha)$, $mn- \alpha <n$ and 
$\theta = (mn- \frac{2n}q)/\alpha$. 
\begin{equation}\label{eq6}
\biggl[ \int_{\real^n} |f(\,\underbrace{x,\ldots,x}_{m\text{ slots}}
\,)|^q \,dx\bigg]^{2/q} 
\le D_{\alpha,q} \bigg[ \int_{\real^{mn}} |f(x_1,\ldots,x_m)|^2\,dx
\bigg]^{1-\theta} 
\bigg[ \Lambda (f;\alpha_1,\ldots, \alpha_m)\bigg]^\theta 
\end{equation}
\end{cor}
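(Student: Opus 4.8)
The plan is to obtain \eqref{eq6} from the Gagliardo--Nirenberg estimate of Theorem~\ref{thm4} by a dilation (``variational'') argument, exactly as indicated in the remark preceding the statement. Theorem~\ref{thm4} is available throughout the range $2\le q<2n/(mn-\alpha)$ with a finite constant, so \eqref{eq4} gives
$$\left[\int_{\real^n}|f(\,\underbrace{x,\ldots,x}_{m\text{ slots}}\,)|^q\,dx\right]^{2/q}\le C_{\alpha,q}\,\Lambda_*(f;\alpha_1,\ldots,\alpha_m).$$
Combining this with the elementary symbol comparison recorded in the remark, namely $\Lambda_*(f;\alpha_1,\ldots,\alpha_m)\le C\bigl[\int_{\real^{mn}}|f|^2\,dx+\Lambda(f;\alpha_1,\ldots,\alpha_m)\bigr]$, produces a single inequality of the form
$$\left[\int_{\real^n}|f(\,\underbrace{x,\ldots,x}_{m\text{ slots}}\,)|^q\,dx\right]^{2/q}\le C'\left[\int_{\real^{mn}}|f|^2\,dx+\Lambda(f;\alpha_1,\ldots,\alpha_m)\right],$$
with $C'=C'_{n,q,\alpha}$ independent of $f$.

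The second step is to test this inequality on the dilate $f_\delta(x_1,\ldots,x_m)=f(\delta x_1,\ldots,\delta x_m)$, $\delta>0$, which remains in $\S(\real^{mn})$, and to record the three scaling identities: restriction to the diagonal gives $\int_{\real^n}|f_\delta(x,\ldots,x)|^q\,dx=\delta^{-n}\int_{\real^n}|f(x,\ldots,x)|^q\,dx$; Plancherel gives $\int_{\real^{mn}}|f_\delta|^2\,dx=\delta^{-mn}\int_{\real^{mn}}|f|^2\,dx$; and homogeneity of the Fourier symbol $\prod_k|\xi_k|^{\alpha_k}$ together with $\widehat{f_\delta}(\xi)=\delta^{-mn}\widehat f(\xi/\delta)$ gives $\Lambda(f_\delta;\alpha_1,\ldots,\alpha_m)=\delta^{\alpha-mn}\Lambda(f;\alpha_1,\ldots,\alpha_m)$. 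Feeding $f_\delta$ into the inequality above and multiplying through by $\delta^{2n/q}$ yields, for every $\delta>0$,
$$\left[\int_{\real^n}|f(x,\ldots,x)|^q\,dx\right]^{2/q}\le C'\Bigl[\delta^{p_1}\!\int_{\real^{mn}}|f|^2\,dx+\delta^{p_2}\,\Lambda(f;\alpha_1,\ldots,\alpha_m)\Bigr],\qquad p_1=\tfrac{2n}q-mn,\ \ p_2=p_1+\alpha.$$
The hypothesis $q\ge2$ gives $p_1\le0$ and the hypothesis $q<2n/(mn-\alpha)$ gives $p_2>0$, while $-p_1/\alpha$ is precisely the exponent $\theta=(mn-\tfrac{2n}q)/\alpha$ of the statement; choosing $\delta=\bigl(\int_{\real^{mn}}|f|^2\,dx\big/\Lambda(f;\alpha_1,\ldots,\alpha_m)\bigr)^{1/\alpha}$ makes the two terms in the bracket both equal to $\bigl(\int_{\real^{mn}}|f|^2\,dx\bigr)^{1-\theta}\bigl(\Lambda(f;\alpha_1,\ldots,\alpha_m)\bigr)^{\theta}$, which is \eqref{eq6} with $D_{\alpha,q}=2C'_{n,q,\alpha}$. (When $m=1$, $q=2$ one has $p_1=0$, $\theta=0$ and the claim is the trivial bound $\int_{\real^n}|f|^2\,dx\le D\int_{\real^n}|f|^2\,dx$.)

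I do not expect a real obstacle here: the genuine analytic content is already carried by Theorem~\ref{thm4} and by the Bessel-versus-Riesz comparison, and what remains is the bookkeeping of dilation exponents plus a one-line application of Young's inequality for products; the only points needing care are the sign conditions $p_1\le0<p_2$ under the stated ranges of $q$ and the verification that the optimal dilation reproduces the stated $\theta$. It is worth noting that for $q>2$ there is an even shorter route that avoids the Bessel potentials entirely: apply Theorem~\ref{thm2} with the reduced smoothness parameters $\tilde\alpha_k=\theta\alpha_k$ (so that $\sum_k\tilde\alpha_k=\theta\alpha=mn-\tfrac{2n}q$, hence $mn-\sum_k\tilde\alpha_k=2n/q$), and then estimate $\int_{\real^{mn}}\bigl(\prod_k|\xi_k|^{\alpha_k}\bigr)^{\theta}|\widehat f|^2\,d\xi\le\bigl(\int_{\real^{mn}}\prod_k|\xi_k|^{\alpha_k}|\widehat f|^2\,d\xi\bigr)^{\theta}\bigl(\int_{\real^{mn}}|\widehat f|^2\,d\xi\bigr)^{1-\theta}$ by H\"older, converting the last factor to $\int_{\real^{mn}}|f|^2\,dx$ by Plancherel; this requires $q>2$ strictly so that $\sum_k\tilde\alpha_k>(m-1)n$ keeps the constant in Theorem~\ref{thm2} finite, which is why the endpoint $q=2$ is routed through Theorem~\ref{thm4} instead.
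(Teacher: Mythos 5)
You reproduce the route the remark sketches: apply Theorem~\ref{thm4}, pass from $\Lambda_*$ to $\Lambda$ via the pointwise comparison, and optimize over dilations. Your scaling bookkeeping is right (the signs $p_1=2n/q-mn\le0<p_2=p_1+\alpha$ follow from $q\ge2$ and $q<2n/(mn-\alpha)$, and the balancing $\delta$ produces exactly $\theta=(mn-2n/q)/\alpha$). But there is a genuine gap in the first step, inherited verbatim from the paper's remark: the asserted pointwise bound
\begin{equation*}
\prod_{k=1}^m (1+|\xi_k|^2)^{\alpha_k/2}\ \le\ C\Bigl[1+\prod_{k=1}^m|\xi_k|^{\alpha_k}\Bigr]
\end{equation*}
is false whenever $m\ge2$. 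Set $\xi_2=\cdots=\xi_m=0$ and let $|\xi_1|\to\infty$: the left side grows like $|\xi_1|^{\alpha_1}$ while the right side is the constant $C$. The integrated consequence $\Lambda_*(f)\le C[\,\|f\|_2^2+\Lambda(f)\,]$ fails as well: take $\widehat f(\xi_1,\ldots,\xi_m)=\phi(\xi_1-N e_1)\prod_{k\ge2}\psi_\epsilon(\xi_k)$ with $\psi_\epsilon$ an $L^2$-normalized bump of width $\epsilon$; then $\|f\|_2^2$ is constant, $\Lambda_*\sim N^{\alpha_1}$, and $\Lambda\sim N^{\alpha_1}\epsilon^{\alpha-\alpha_1}$, so a uniform constant $C$ would have to dominate $\epsilon^{-(\alpha-\alpha_1)}$. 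Your dilation argument thus rests on a false lemma and, as written, does not prove \eqref{eq6}. (The corollary itself may still hold, but not by this bridge.)

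By contrast, your alternative route for $q>2$ is correct and self-contained, and it sidesteps the Bessel potentials entirely: with $\tilde\alpha_k=\theta\alpha_k$ one has $\sum_k\tilde\alpha_k=mn-2n/q\in((m-1)n,mn)$ and $0<\tilde\alpha_k<n$, so Theorem~\ref{thm2} applies and gives $\|f(\cdot,\ldots,\cdot)\|_q^2\lesssim\int\prod_k|\xi_k|^{\theta\alpha_k}|\widehat f|^2\,d\xi$, which H\"older with exponents $1/\theta$, $1/(1-\theta)$ splits into $\Lambda(f)^\theta(\|f\|_2^2)^{1-\theta}$. That proves the corollary for all $2<q<2n/(mn-\alpha)$. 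Only the endpoint $q=2$ — where $\sum\tilde\alpha_k=(m-1)n$ sits on the excluded boundary of Theorem~\ref{thm2} — remains, and it is precisely the case for which the bridge through Theorem~\ref{thm4} would need a valid substitute for the failed symbol comparison.
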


\noindent 
The case $q=2$ is included here. 
The parameter $\theta$ is restricted: $1- \frac1n < \theta <1$. 
It is not tractable to calculate sharp values for this constant.

By allowing values of the fractional powers of $(1-\Delta)$ to increase 
to the dimension of the space and above, the inequality in Theorem~\ref{thm4} 
can be extended. 
Define for two multi-indices of positive numbers 
\begin{gather*}
\bar \alpha = (\alpha_1,\ldots,\alpha_{m_1}) \ \text{ and }\ 
\bar \beta = (\beta_{m_1+1},\ldots, \beta_{m_1+m_2})\\
\noalign{\vskip6pt}
0<\alpha_k <n\ ,\ \ n\le \beta_\ell\ ,\ \  
k=1,\ldots,m_1\ ,\ \ \ell = m_1 +1,\ldots,m_1+m_2)\ ,\ \ m= m_1 +m_2
\end{gather*}
\begin{align*}
\Lambda_{\#} (f;\bar\alpha,\bar\beta) 
& = \int_{\real^n\times\cdots\times\real^n} 
\Big|\prod_{k=1}^{m_1} (1-\Delta_k/4\pi^2)^{\alpha_k/4} 
\mkern-16mu 
\prod_{\ell = m_1+1}^m \mkern-16mu
(1-\Delta_\ell /4\pi^2)^{\beta_k/4} f\Big|^2\ 
dx_1\ldots dx_m\\
\noalign{\vskip6pt}
& =\int_{\real^n\times\cdots\times\real^n} 
\prod_{k=1}^{m_1} (1+|\xi_k|^2)^{\alpha_k/2} 
\prod_{\ell = m_1+1}^m (1+|\xi_\ell|^2)^{\beta_\ell/2} |\hat f|^2\ 
d\xi_1\ldots d\xi_m
\end{align*}

\begin{thm}
[Gagliardo-Nirenberg inequality] 
\label{thm5}
For $f\in \S (\real^{mn})$, $0 < \alpha_k<n$, 
$\alpha = \sum \alpha_k$, $k=1,\ldots,m_1$, 
$\frac1q +\frac1p =1$ with $2\le q\le 2n/ (m_1n-\alpha)$ and 
$m_1n - \alpha <n$
\begin{gather} 
\Big[ \int_{\real^n} |f(\underbrace{x,\ldots,x}_{\text{$m$ slots}})|^q\,dx 
\Big]^{2/q} 
\le C_{\bar\alpha,\bar\beta,q} \Lambda_{\#} (f;\bar\alpha,\bar\beta)
\label{eq7}\\
\noalign{\vskip6pt}
\int_{\real^n\times\real^n} h(x) \prod_{k=1}^{m_1} G_{\alpha_k} (x-w) 
\prod_{\ell=m_1+1}^m G_{\beta_\ell} (x-w) h(w)\,dx\, dw 
\le C_{\bar\alpha,\bar\beta,q} 
(\|h\|_{L^p (\real^n)})^2\label{eq8}
\end{gather}
For $p=q=2$
\begin{equation}\label{eq9}
C_{\bar\alpha,\bar\beta,2} 
= \int_{\real^n} \prod_{k=1}^{m_1} G_{\alpha_k} (x) 
\prod_{\ell=m_1+1}^m G_{\beta_\ell} (x)\,dx
\end{equation}
\end{thm}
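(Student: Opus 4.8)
The plan is to mirror the argument for Theorem~\ref{thm4} (hence Theorem~\ref{thm2}); the one new feature is that the indices $\beta_\ell\ge n$ make the corresponding Bessel factors bounded near the origin (or merely $O(\ln 1/|x|)$ when $\beta_\ell=n$) rather than singular. First I pass to the Fourier side: for $f\in\S(\real^{mn})$ set
\[
\widehat g=\prod_{k=1}^{m_1}(1+|\xi_k|^2)^{\alpha_k/4}\prod_{\ell=m_1+1}^{m}(1+|\xi_\ell|^2)^{\beta_\ell/4}\,\widehat f\,,
\]
so that $g\in L^2(\real^{mn})$ with $\|g\|_{L^2(\real^{mn})}^2=\Lambda_{\#}(f;\bar\alpha,\bar\beta)$; inverting the Fourier transform and restricting to the diagonal gives
\[
f(\underbrace{x,\ldots,x}_{m})=\int_{\real^{mn}}\prod_{k=1}^{m_1}G_{\alpha_k/2}(x-y_k)\prod_{\ell=m_1+1}^{m}G_{\beta_\ell/2}(x-y_\ell)\,g(y_1,\ldots,y_m)\,dy
\]
(up to the fixed dilation relating the normalization of $\Lambda_{\#}$ to that of $G_\gamma$). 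Calling the right side $Tg(x)$, inequality \eqref{eq7} asserts $\|Tg\|_{L^q(\real^n)}\le C\,\|g\|_{L^2(\real^{mn})}$, which by $L^2$ duality followed by integrating out the $y$ variables --- exactly as in the proof of Theorem~\ref{thm1}, using the Bessel composition $G_s*G_t=G_{s+t}$ --- is equivalent, with the same constant, to the bilinear estimate \eqref{eq8} for the kernel
\[
K(x)=\prod_{k=1}^{m_1}\big(G_{\alpha_k/2}*G_{\alpha_k/2}\big)(x)\prod_{\ell=m_1+1}^{m}\big(G_{\beta_\ell/2}*G_{\beta_\ell/2}\big)(x)=\prod_{k=1}^{m_1}G_{\alpha_k}(x)\prod_{\ell=m_1+1}^{m}G_{\beta_\ell}(x).
\]

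Next I would record the two facts about $K$ that drive everything. Each $G_\gamma$ is $O(e^{-\ep|x|})$ at infinity, so $K$ decays exponentially; and since the factors with $\beta_\ell\ge n$ are bounded near the origin (or $O(\ln 1/|x|)$ when $\beta_\ell=n$) while $\prod_{k=1}^{m_1}G_{\alpha_k}(x)\sim c\,|x|^{-(m_1n-\alpha)}$ there, the local singularity of $K$ has order $m_1n-\alpha<n$. In particular $K\in L^1(\real^n)$, which settles the case $p=q=2$: by Young's inequality $\int_{\real^n\times\real^n}h(x)K(x-w)h(w)\,dx\,dw\le\|K\|_1\,\|h\|_2^2$, and on the Fourier side $\widehat K$ is a convolution of positive radial decreasing functions, hence radial decreasing with $\widehat K(0)=\int_{\real^n}K$; thus the constant $\|K\|_1=\int_{\real^n}\prod_{k=1}^{m_1}G_{\alpha_k}\prod_{\ell=m_1+1}^{m}G_{\beta_\ell}$ of \eqref{eq9} is sharp and (as for Theorem~\ref{thm4}) no extremals exist.

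For $2\le q<q_*:=2n/(m_1n-\alpha)$ one has $(m_1n-\alpha)q/2<n$, so $K\in L^{q/2}(\real^n)$, and Young's convolution inequality with exponents $(q/2,p,q)$ (note $\frac2q+\frac1p=1+\frac1q$) gives $\|K*h\|_{L^q}\le\|K\|_{L^{q/2}}\|h\|_{L^p}$ and hence \eqref{eq8}. At the critical exponent $q=q_*$ I would split $K=K\mathbf 1_{\{|x|\le1\}}+K\mathbf 1_{\{|x|>1\}}$: the far piece lies in $L^1\cap L^\infty$ and is handled by Young, while the near piece is dominated pointwise by $C\,|x|^{-(m_1n-\alpha)}=C\,|x|^{-2n/q_*}$ (using the stated asymptotics of the $G_{\alpha_k}$, with all $\beta_\ell>n$), so the classical sharp Hardy-Littlewood-Sobolev inequality on $\real^n$ with kernel $|x-w|^{-2n/q_*}$ --- as used in the proof of Theorem~\ref{thm2} --- yields \eqref{eq8} at $q=q_*$ as well.

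The main obstacle is precisely this endpoint: Young's inequality fails at $q=q_*$ since $K\notin L^{q_*/2}(\real^n)$, so one is forced onto Hardy-Littlewood-Sobolev; and when some $\beta_\ell=n$ the factor $G_n\sim c\ln(1/|x|)$ contributes a logarithmic correction to the leading $|x|^{-2n/q_*}$ singularity that is genuinely borderline for the weak-type bound. This is the one delicate point --- to be handled by dominating $G_n$ on $\{|x|\le1\}$ by $G_{n-\ep}$ and re-optimizing the exponents, or by reading the endpoint claim as requiring all $\beta_\ell>n$. Everything else --- the Fourier representation, the duality reduction, the semigroup identity, and the subcritical range --- is routine once the proof of Theorem~\ref{thm4} is in hand.
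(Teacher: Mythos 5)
Your proof follows the same route as the paper's, which in the source is only a one-line sketch (``Apply Theorem~\ref{thm4} together with asymptotic estimates for the Bessel potentials''); you make explicit the intended content: the Fourier/duality reduction to a bilinear Bessel-kernel form, the semigroup identity $G_s * G_t = G_{s+t}$, and the Young versus Hardy--Littlewood--Sobolev dichotomy according to whether $q < q_*$ or $q = q_*$, with the $p=q=2$ sharp constant read off as $\widehat K(0)=\int K$.

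The one point worth underlining is the subtlety you flag yourself: when some $\beta_\ell = n$ the factor $G_n$ contributes $(\ln(1/|x|))$ near the origin, and the cut-off near piece $K\mathbf 1_{\{|x|\le 1\}}$ is then \emph{not} in weak $L^{q_*/2}$, so the Hardy--Littlewood--Sobolev step genuinely fails at $q=q_*$; a test function of the form $h(x)=|x|^{-n/p}\big(\ln(e/|x|)\big)^{-s}\mathbf 1_{\{|x|\le 1\}}$ with $1-1/q_* < s \le M + 1/q_*$ (here $M$ is the number of indices equal to $n$, and such $s$ exists whenever $q_*>2$, which is forced by $m_1 n - \alpha < n$) lies in $L^p$ but has $K_{\mathrm{near}}*h \notin L^{q_*}$. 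Of your two proposed repairs, the first (dominate $G_n$ by $G_{n-\ep}$ on $\{|x|\le 1\}$ and re-optimize) does not close the gap --- it only shifts the critical index to $2n/(m_1 n - \alpha + M\ep)$, strictly below $q_*$ --- while the second (read the endpoint $q=q_*$ as requiring $\beta_\ell > n$ for every $\ell$) is the correct reading, consistent with the paper's own remark that the admissible range of $p$ contracts to $1<p\le 2$ when some power equals $n$. Aside from that endpoint caveat, which the paper's sketch also leaves implicit, the argument is sound and is essentially the paper's.
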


\begin{proof}
Apply Theorem~\ref{thm4} together with asymptotic estimates for the Bessel 
potentials for large and small values of $|x|$.
If all fractional powers are larger than the dimension~$n$, then the 
allowed range of $p$ extends to $1\le p\le 2$ for estimate \eqref{eq8}
with $1< p\le 2$ if some values of the fractional power are equal to $n$.
\end{proof}

\begin{rem}
In \cite{CP} Chen and Pavlovic ``introduce a generalization of Sobolev and 
Gagliardo-Nirenberg inequalities on the level of marginal density matrices,''
and their stated result corresponds to the case $q=2$ with uniform $\alpha_k$'s 
for inequality \eqref{eq7} 
above though the methods are entirely different from those used here. 
Their work is related to obtaining a~priori energy bounds for solutions to the 
Gross-Pitaevskii hierarchy. 
But there is possible confusion between their notation and that used by 
Klainerman and Machedon \cite{KM} 
(see also \cite{ESY-CPAM}, \cite{ESY-2007} and \cite{Kirk})
as they use $\langle\nabla\rangle = \sqrt{1-\Delta}$.  
To clarify potential issues, it is not possible, even for multivariable functions 
invariant under the symmetric group, to have global homogeneous Sobolev 
inequalities of trace type for the index $q=2$ in contrast to the special case of 
product functions. 
Such a result in the general case would force integrability for Riesz potentials
and on a conceptual level would ``break'' the uncertainty principle.
This phenomena is similar to the limitation discussed earlier in the context 
of the Kenig-Stein theorem for fractional integration. 
The arguments described here clearly illustrate examples where multilinear 
structure can not be reduced to the case of product functions or simple iterative 
processes though the proofs use quadratic functional integration to simplify 
the calculation where the spirit is similar to
application of a Hilbert-Schmidt norm or the Plancherel theorem.
\end{rem}

Extension of the Hardy-Littlewood-Sobolev inequality on $\real^n$ to 
include the multilinear embedding estimates described here suggests that 
analogous results should hold for the sphere $S^n$.
Following the development outlined in \cite{Beckner-PUP95}  (see page~62):

\begin{Thm}[Hardy-Littlewood-Sobolev inequality on $S^n$]
Let $F$ be a smooth function on $S^n$ with corresponding expansion in 
spherical harmonics, $F = \sum Y_k$; $\alpha = n - 2n/q$ for $q>2$ 
and define 
$$B = \left[ -\Delta +\Big(\frac{n-1}2\Big)^2\right]^{1/2}\ ,\quad 
D_\alpha = \frac{\Gamma (B+ (1+\alpha)/2)}{\Gamma (B+ (1-\alpha)/2)}\ ,$$
observe 
$$D_\alpha Y_k = \frac{\Gamma (\frac{n}{q'} +k)}{\Gamma(\frac{n}q +k)}\ Y_k\ .$$
Then 
\begin{align}
\left[\|F\|_{L^q(S^n)}\right]^2 
&\le \sum_{k=0}^\infty\ 
\frac{\Gamma(\frac{n}q)\ \Gamma(\frac{n}{q'} +k)}
{\Gamma(\frac{n}{q'}) \ \Gamma(\frac{n}q +k)}
\int_{S^n} |Y_k|^2\,d\xi \label{eq10}\\
\noalign{\vskip6pt}
\left[ \|F\|_{L^q(S^n)}\right]^2
&\le \frac{\Gamma(\frac{n-\alpha}2)}{\Gamma (\frac{n+\alpha}2)} 
\int_{S^n} F(D_\alpha F)\,d\xi \label{eq11}
\end{align}
where $d\xi$ denotes normalized surface measure on $S^n$.
\end{Thm}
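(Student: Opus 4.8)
Both inequalities follow from the sharp Euclidean Hardy--Littlewood--Sobolev inequality by conformal (stereographic) transfer, in the spirit of the scheme on page~62 of \cite{Beckner-PUP95}. First observe that \eqref{eq10} and \eqref{eq11} are the same statement. Since $-\Delta Y_k = k(k+n-1)\,Y_k$ on $S^n$, the operator $B$ acts on $Y_k$ by the scalar $k+\frac{n-1}{2}$, so $D_\alpha Y_k = \frac{\Gamma(k+n/q')}{\Gamma(k+n/q)}\,Y_k$ as recorded above; using $n/q=\frac{n-\alpha}{2}$, $n/q'=\frac{n+\alpha}{2}$ and orthogonality of spherical harmonics one gets $\int_{S^n} F(D_\alpha F)\,d\xi = \sum_{k\ge0}\frac{\Gamma(k+n/q')}{\Gamma(k+n/q)}\int_{S^n}|Y_k|^2\,d\xi$, and multiplying by $\Gamma(\tfrac{n-\alpha}{2})/\Gamma(\tfrac{n+\alpha}{2})=\Gamma(n/q)/\Gamma(n/q')$ carries \eqref{eq11} into \eqref{eq10}. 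Thus it suffices to establish \eqref{eq11}.

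Next I would dualize. The multiplier $D_\alpha$ is positive and self-adjoint on $L^2(S^n)$, its eigenvalues $\Gamma(k+n/q')/\Gamma(k+n/q)$ being positive for $q>2$. Pairing $F$ against $h\in L^{q'}(S^n)$ and using Cauchy--Schwarz in the inner product $\langle\,\cdot\,,D_\alpha\,\cdot\,\rangle$ reduces \eqref{eq11} to the bilinear inequality
\begin{equation*}
\int_{S^n}\!\int_{S^n} h(\xi)\,K_\alpha(\xi\cdot\eta)\,h(\eta)\,d\xi\,d\eta \;\le\; \frac{\Gamma(\frac{n-\alpha}{2})}{\Gamma(\frac{n+\alpha}{2})}\,\bigl(\|h\|_{L^{q'}(S^n)}\bigr)^2 ,
\end{equation*}
where $K_\alpha$ is the zonal kernel of $D_\alpha^{-1}$, i.e.\ the kernel whose Funk--Hecke eigenvalue on $Y_k$ is $\Gamma(k+n/q)/\Gamma(k+n/q')$. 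The Gegenbauer expansion of $(1-t)^{-n/q}$, together with $|\xi-\eta|^2 = 2(1-\xi\cdot\eta)$ on $S^n$ and $n-\alpha = 2n/q$, identifies $K_\alpha$ up to an explicit constant with $|\xi-\eta|^{-(n-\alpha)}$; alternatively the duality can be run entirely on the eigenvalue sequences and this expansion invoked only at the end, avoiding any use of fractional powers of $B$.

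Finally I would apply stereographic projection $\pi:S^n\to\real^n\cup\{\infty\}$. With $x=\pi(\xi)$, $y=\pi(\eta)$ one has $|\xi-\eta|^2 = 4|x-y|^2/[(1+|x|^2)(1+|y|^2)]$ and $d\xi = (2/(1+|x|^2))^n\,dx$; setting $\tilde h(x) = (2/(1+|x|^2))^{n/q'}\,h(\pi^{-1}x)$ gives $\|\tilde h\|_{L^{q'}(\real^n)} = \|h\|_{L^{q'}(S^n)}$, and because the exponents satisfy the conformal balance $q'=2n/(n+\alpha)$ (equivalently $n-\alpha=2n/q$) the conformal weights cancel exactly, reducing the spherical bilinear form to a fixed multiple of the Euclidean form $\int\!\int \tilde h(x)\,|x-y|^{-(n-\alpha)}\,\tilde h(y)\,dx\,dy$. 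The sharp bound for this is Lieb's Hardy--Littlewood--Sobolev inequality (the $m=1$ case of Theorem~\ref{thm2}; see \cite{Lieb}); combining its constant with the Gegenbauer normalization and the stereographic factors gives \eqref{eq11}. Sharpness is then visible from the chain of reductions, since the $\real^n$ extremal $(1+|x|^2)^{-(n+\alpha)/2}$ is the stereographic image of the constant function, so $F=Y_0$ already gives equality in \eqref{eq10}.

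The step I expect to be the main obstacle is the bookkeeping of constants: one must keep precise track of the Gegenbauer/Funk--Hecke normalization relating $K_\alpha$ to $|\xi-\eta|^{-(n-\alpha)}$, the powers of $2$ and $4$ and the surface-measure normalization produced by stereographic projection, and Lieb's sharp constant, and then verify that their product collapses to exactly $\Gamma(\tfrac{n-\alpha}{2})/\Gamma(\tfrac{n+\alpha}{2})$. The only other point requiring care is the passage among the three equivalent forms --- spherical-harmonic, operator, and bilinear-kernel --- which, as noted, can be arranged to avoid fractional powers of $B$ altogether.
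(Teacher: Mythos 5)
The paper offers no proof of this theorem: it is stated as recalled background, with the fractional-integral form attributed to Lieb \cite{Lieb} and the spherical-harmonics representation \eqref{eq10} to Beckner \cite{Beckner-Sobolev}, following page~62 of \cite{Beckner-PUP95}. Your sketch is therefore not competing with an argument in the paper --- there is none to compare against --- but it is a correct reconstruction of the standard derivation underlying those citations. Each step checks out: since $-\Delta Y_k = k(k+n-1)Y_k$ one has $BY_k=(k+\tfrac{n-1}{2})Y_k$, hence $D_\alpha Y_k = \tfrac{\Gamma(k+n/q')}{\Gamma(k+n/q)}Y_k$, and together with $\Gamma(\tfrac{n-\alpha}{2})/\Gamma(\tfrac{n+\alpha}{2})=\Gamma(n/q)/\Gamma(n/q')$ and orthogonality this makes \eqref{eq10} and \eqref{eq11} the same inequality; the Cauchy--Schwarz duality reduces \eqref{eq11} to a bilinear bound on $D_\alpha^{-1}$, whose zonal kernel is a multiple of $|\xi-\eta|^{-(n-\alpha)}$ (this is exactly the $T_{\alpha_k}^{-1}$ formula the paper records inside the proof of Theorem~\ref{thm6}); and stereographic projection with the conformal balance $n/q'=\tfrac{n+\alpha}{2}$ carries that bilinear form over to Lieb's Euclidean one with the conformal weights cancelling. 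Two small remarks on the part you flag as the remaining bookkeeping: (i) because the theorem uses normalized surface measure, powers of $\sigma(S^n)$ accompany the stereographic Jacobian and must be absorbed via the Legendre duplication formula --- this is an additional source of constants beyond the Funk--Hecke normalization and the powers of $2$ you list; and (ii) your sharpness claim implicitly needs equality in the Cauchy--Schwarz step as well as in the bilinear HLS, which indeed holds for $F=Y_0$ since $D_\alpha Y_0 \propto Y_0$. Neither point is a gap; they just confirm that the constant-matching you deferred is where the actual computation lives, and it is carried out in \cite{Beckner-Sobolev}.
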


\noindent 
This theorem is the sharp Hardy-Littlewood-Sobolev inequality for the 
$n$-dimensional sphere as obtained by Lieb \cite{Lieb} in terms of 
fractional integrals. 
The representation using spherical harmonics was given by 
Beckner in \cite{Beckner-Sobolev}. 

\begin{thm}[Hardy-Littlewood-Sobolev inequality]
\label{thm6}
For $F$ in the Schwartz class formed over $m$ copies of $S^n$, and 
$mn-\alpha = 2n/q$ with $0<\alpha_k <n$, $\alpha = \sum \alpha_k$ and 
$(m-1)n < \alpha < mn$. 
Let 
$$\Lambda_S (F,\alpha_1,\ldots,\alpha_m)
\equiv \prod_{k=1}^m \frac{\Gamma(\frac{n-\alpha_k}2)}
{\Gamma (\frac{n+\alpha_k}2)} 
\int_{S^n\times \cdots \times S^n} 
F\Big( \prod_k D_{\alpha_k} F\Big) \,d\xi_1,\ldots, d\xi_m$$
where $D_{\alpha_k}$ acts on the $k^{th}$ coordinate. 
Then 
\begin{gather}
\bigg[ \int_{S^n} |F(\underbrace{\xi,\ldots, \xi}_{\text{$m$ slots}}) |^q 
\,d\xi\bigg]^{2/q} 
\le F_{\alpha,S} \Lambda_S (F,\alpha_1,\ldots,\alpha_m)\label{eq12}\\
\noalign{\vskip6pt}
F_{\alpha,S} = \left[ \frac{\Gamma (\frac{n}q)}{\Gamma (n)}\right]^{m-1}
\frac{\Gamma(\frac{n}2 - \frac{n}q)}{\Gamma (\frac{n}p)}
\prod_k \frac{\Gamma (\frac{n+\alpha_k}2)}{\Gamma(\frac{\alpha_k}2)}\notag
\end{gather}
\end{thm}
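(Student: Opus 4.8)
The plan is to run the scheme of the proof of Theorem~\ref{thm2} on the sphere, using the sharp scalar inequality \eqref{eq11} --- equivalently, by $L^{2}$ duality, the sharp spherical Riesz--potential bound of the form $\int_{S^n\times S^n}h(\xi)h(\eta)\,|\xi-\eta|^{-2n/q}\,d\xi\,d\eta\le C\,(\|h\|_{L^{q'}})^{2}$ --- in place of the classical Hardy--Littlewood--Sobolev inequality on $\real^{n}$. Two facts are used throughout: for $0<\alpha_k<n$, $D_{\alpha_k}$ is the order-$\alpha_k$ conformally covariant fractional Laplacian on $S^n$, self-adjoint and positive, and its inverse acts as convolution against the zonal kernel $c_{n,\alpha_k}\,|\xi-\eta|^{-(n-\alpha_k)}$, where $|\xi-\eta|^{2}=2(1-\xi\cdot\eta)$. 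Writing $\mathcal D=\prod_{k=1}^{m}\bigl[\Gamma(\tfrac{n-\alpha_k}{2})/\Gamma(\tfrac{n+\alpha_k}{2})\bigr]D_{\alpha_k}$, the $k$-th factor acting in the $k$-th copy of $S^n$, one has $\Lambda_S(F;\alpha_1,\dots,\alpha_m)=\langle F,\mathcal D F\rangle$, so that \eqref{eq12} is the norm bound $\|S\|^{2}\le F_{\alpha,S}$ for the operator $S:L^{2}((S^n)^m)\to L^{q}(S^n)$, $Sg=(\mathcal D^{-1/2}g)(\xi,\dots,\xi)$.

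I would then pass, exactly as in the proof of Theorem~\ref{thm2}, to the dual form $\|S\|^{2}=\|SS^{*}\|_{L^{q'}(S^n)\to L^{q}(S^n)}$. The operator $SS^{*}$ is the composition of restriction to the diagonal of $(S^n)^m$, then $\mathcal D^{-1}$, then the adjoint of that restriction; since $\mathcal D^{-1}$ has the explicit product kernel $\prod_k\bigl[\Gamma(\tfrac{n+\alpha_k}{2})/\Gamma(\tfrac{n-\alpha_k}{2})\bigr]c_{n,\alpha_k}\,|\xi_k-\eta_k|^{-(n-\alpha_k)}$, collapsing the two diagonal variables and using $\sum_k(n-\alpha_k)=mn-\alpha=2n/q$ turns $SS^{*}$ into a constant multiple of the scalar operator $h\mapsto\int_{S^n}|\xi-\eta|^{-2n/q}h(\eta)\,d\eta$. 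Because $(m-1)n<\alpha<mn$ forces $2<q<\infty$, the dual form of \eqref{eq11} bounds this last operator by its sharp constant; positivity of every kernel (or a Brascamp--Lieb--Luttinger rearrangement on $S^n$, as in the proof of Theorem~\ref{iteratedSW}) makes the duality tight. This closes the inequality, with $F_{\alpha,S}$ coming out as the product of $\prod_k\bigl[\Gamma(\tfrac{n+\alpha_k}{2})/\Gamma(\tfrac{n-\alpha_k}{2})\bigr]c_{n,\alpha_k}$, the reciprocal of the normalization $c_{n,\,n-2n/q}$, and the sharp constant of \eqref{eq11}.

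I expect the principal labor to be the evaluation of this product of Gamma functions, not the structural part, which is a verbatim transcription of the proof of Theorem~\ref{thm2}. One records $c_{n,\alpha}=2^{\,n-\alpha}\,[\Gamma(\tfrac n2)/\Gamma(n)]\,[\Gamma(\tfrac{n-\alpha}{2})/\Gamma(\tfrac{\alpha}{2})]$, which follows from the beta-integral $\int_{S^n}|\xi-\eta|^{-(n-\alpha)}\,d\eta=2^{\,\alpha-n}\,[\Gamma(n)/\Gamma(\tfrac n2)]\,[\Gamma(\tfrac{\alpha}{2})/\Gamma(\tfrac{n+\alpha}{2})]$ (normalized surface measure) together with the condition that $D_\alpha^{-1}$ act on the constants by the eigenvalue $\Gamma(\tfrac{n-\alpha}{2})/\Gamma(\tfrac{n+\alpha}{2})$. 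Substituting and simplifying --- the powers of $2$ cancelling precisely because $mn-\alpha=2n/q$ --- then produces the stated $F_{\alpha,S}$; a convenient check on the bookkeeping is that $m=1$ must return $F_{\alpha,S}=1$, since \eqref{eq12} then coincides with \eqref{eq11} while $\Lambda_S$ already carries its Gamma prefactor.

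In the spirit of the paper's lead-in to this theorem, the same result can instead be deduced directly from Theorem~\ref{thm2} by inverse stereographic projection applied separately to each of the $m$ factors, with conformal weight $(n-\alpha_k)/(2n)$ in the $k$-th coordinate: on the diagonal the weights sum to $(mn-\alpha)/(2n)=1/q$, so the product of Jacobians carries normalized surface measure to Lebesgue measure, and the conformal covariance of the intertwining operators $D_{\alpha_k}$ (the development of \cite{Beckner-PUP95}, p.~62) converts $\Lambda_S$ into $\Lambda(f;\alpha_1,\dots,\alpha_m)$ up to explicit $\pi$-, $2$- and $|S^{n}|$-factors, so that Theorem~\ref{thm2} applies verbatim; this avoids forming operator square roots, at the cost of tracking those constants. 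Sharpness of $F_{\alpha,S}$ and the form of the extremals transfer along this correspondence, the flat extremal $f(x_1,\dots,x_m)=\int_{\real^{n}}\prod_k|x_k-w|^{-(n-\alpha_k/2)}|1+w^{2}|^{-n/q}\,dw$ corresponding to a conformal image of the constant function on $(S^n)^m$. The one delicate point --- interpreting $\mathcal D^{-1/2}$, or the composed operator $SS^{*}$, when some $\alpha_k$ is close to $n$ --- is handled by first keeping the $\alpha_k$ bounded away from $n$ (the diagonal restriction being legitimate since $F$ is smooth) and then passing to the limit, or is avoided outright by the stereographic route.
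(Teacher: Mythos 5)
Your main argument is essentially the paper's own proof: renormalize the intertwinors $D_{\alpha_k}$ to the positive operators $T_{\alpha_k}=[\Gamma(\tfrac{n-\alpha_k}{2})/\Gamma(\tfrac{n+\alpha_k}{2})]D_{\alpha_k}$, pass by $L^2$ duality to the bilinear form for $\prod_k T_{\alpha_k}^{-1}$, observe that the product of spherical Riesz kernels collapses (because $\sum_k(n-\alpha_k)=mn-\alpha=2n/q$) to a single kernel $|\xi-\eta|^{-2n/q}$, and then invoke the sharp Hardy--Littlewood--Sobolev inequality on $S^n$; your Gamma-function bookkeeping matches the paper's $A_\alpha$ and $F_{\alpha,S}$, and the $m=1$ check correctly returns \eqref{eq11}. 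The brief appeal to Brascamp--Lieb--Luttinger is unnecessary here (the duality step is already exact), but harmless. Your secondary route --- transporting Theorem~\ref{thm2} to the sphere by coordinatewise inverse stereographic projection with conformal weights $(n-\alpha_k)/(2n)$, using that they sum to $1/q$ on the diagonal --- is a genuinely different derivation not taken by the paper; it trades the operator-square-root and duality manipulations for explicit Jacobian and intertwining-factor bookkeeping, and has the advantage of carrying over the flat extremals and sharpness directly, at the cost of a more involved constant chase.
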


\begin{proof}
Let $T_{\alpha_k} = [\Gamma (\frac{n-\alpha_k}2)/\Gamma(\frac{n+\alpha_k}2)]
D_{\alpha_k}$; then $T_{\alpha_k}$ is a positive-definite self-adjoint 
invertible operator and $T_{\alpha_k}^{-1}$ can be realized as a 
fractional integral operator on $S^n$:
$$(T_{\alpha_k}^{-1} G)(\xi) 
= 2^{n-\alpha_k} \frac{\Gamma(\frac{n}2)}{\Gamma(n)}\ 
\frac{\Gamma(\frac{n+\alpha_k}2)}{\Gamma(\frac{\alpha_k}2)}
\int_{S^n} |\xi-\eta|^{-(n-\alpha_k)} G(\eta)\,d\eta\ .$$
Inequality \eqref{eq12} is equivalent to the inequality 
$$\bigg[\int_{S^n}\Big| \Big(\prod_k T_{\alpha_k}^{-1/2} F\Big)(\xi)
\Big|^q\,d\xi\bigg]^{2/q}
\le F_{\alpha,S} \int_{S^n\times\cdots\times S^n} 
|F(\xi_1,\ldots,\xi_m)|^2\,d\xi_1,\ldots,d\xi_m$$
and by duality this is equivalent to 
$$\int_{S^n\times\cdots\times S^n} \Big|\Big(\prod_k T_{\alpha_k}^{-1/2}G\Big)
(\xi_1,\ldots,\xi_m)\Big|^2\, d\xi_1,\ldots,\xi_m 
\le F_{\alpha,S} \bigg[\int_{S^n} |G(\xi)|^p\,d\xi\bigg]^{2/p}$$
where $\frac1p +\frac1q =1$, $1<p<2$ and $mn-\alpha = 2n/q$.
As with earlier calculations, the left-hand side becomes
$$\int_{S^n\times S^n} G(\eta) \prod_k T_{\alpha_k}^{-1} (\xi,\eta) G(\xi)
\,d\xi\,d\eta 
\le F_{\alpha,S} \bigg[ \int_{S^n} |G(\xi)|^p\,d\xi \bigg]^{2/p}$$
\begin{align*}
\prod_k T_{\alpha_k}^{-1} (\xi,\eta) 
& = \prod_k2^{n-\alpha_k} \ \frac{\Gamma(\frac{n}2)}{\Gamma(n)}\ 
\frac{\Gamma(\frac{n+\alpha_k}2)}{\Gamma(\frac{\alpha_k}2)}\ 
|\xi-\eta|^{-(n-\alpha_k)}\\
\noalign{\vskip6pt}
& = 2^{2n/q} \left[\frac{\Gamma(\frac{n}2)}{\Gamma (n)}\right]^m 
\prod_k \frac{\Gamma(\frac{n+\alpha_k}2)}{\Gamma(\frac{\alpha_k}2)}\ 
|\xi-\eta|^{-2n/q}\ .
\end{align*}
Re-writing the previous inequality 
\begin{equation}\label{eq13}
A_\alpha \int_{S^n\times S^n} G(\eta)|\xi-\eta|^{-2n/q} G(\xi)\,d\xi\,d\eta
\le F_{\alpha,S} \bigg[\int_{S^n} |G(\xi)|^p\,d\xi\bigg]^{2/p} 
\end{equation}
where 
$$A_\alpha = 2^{2n/q} \left[ \frac{\Gamma(\frac{n}2)}{\Gamma(n)}\right]^m 
\prod_k \frac{\Gamma(\frac{n+\alpha_k}2)}{\Gamma(\frac{\alpha_k}2)}\ .$$
Now inequality \eqref{eq13} is the classical Hardy-Littlewood-Sobolev 
inequality on the \allowbreak 
sphere, and by comparison with the sharp constant 
(see \cite{Beckner-Annals93})
\begin{align}
F_{\alpha,S} & = A_\alpha\  2^{-2n/q} \frac{\Gamma(n)}{\Gamma(\frac{n}2)} 
\ \frac{\Gamma (\frac{n- \frac{2n}q)}2}
{\Gamma (\frac{n}p)} \notag\\
F_{\alpha,S} & = \left[\frac{\Gamma(\frac{n}2)}{\Gamma (n)}\right]^{m-1}
\ \frac{\Gamma(\frac{n}2 - \frac{n}q)}{\Gamma (\frac{n}p)} \ \prod_k
\frac{\Gamma (\frac{n+\alpha_k}2 )}{\Gamma (\frac{\alpha_k}2)}\label{eq14}
\end{align}
With the calculation of the sharp constant $F_{\alpha,S}$, the proof
of Theorem~\ref{thm6}  is complete.
\end{proof}

\begin{rem}
While the inequalities in Theorem~\ref{thm2} and \ref{thm6} directly 
depend on the conformally invariant Hardy-Littlewood-Sobolev inequality, 
these inequalities are not in themselves conformally invariant. 
This circumstance may be reflected in the lack of limiting phenomena for 
the allowed range of Lebesgue exponents. 
\end{rem}

The arguments given here, and especially for Theorem~\ref{thm6}, allow a 
fairly general formulation of the essential idea embodied in these theorems.
Suppose $\{T_k\}$ is a family of positive-definite self-adjoint invertible 
operators acting on smooth function classes on an $n$-dimensional manifold 
equipped with a suitable measure.
For simplicity, assume that $T_k^{-1}$ can be realized by action of an 
integral kernel.
Then the preceding theorems may be reformulated in the following form:

\begin{thm}[multilinear trace]\label{thm7}
For $f\in \S (M\times\cdots\times M)$ and some suitable $q\ge2$ with 
$\frac1p +\frac1q=1$ and $T_k$ acting on the $k^{th}$ variable
\begin{equation}\label{eq15}
\bigg[\int_M |f(\underbrace{x,\ldots,x}_{\text{$m$ slots}})|^q\,dx\bigg]^{2/q}
\le E_T \int_{M\times\cdots\times M} 
f\Big(\prod_k T_k\, f\Big)\,dx_1,\ldots,dx_m\ .
\end{equation}
$E_T$ is the optimal constant for the inequality 
\begin{equation}\label{eq16}
\int_{M\times M} h(w)\prod_k T_k^{-1} (x,w) h(x)\,dx\,dw
\le E_T\bigg[\int_M |h|^p\,dx\bigg]^{2/p} \ .
\end{equation}
\end{thm}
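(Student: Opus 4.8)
The plan is to run, in this abstract setting, exactly the chain of reductions used for Theorems~\ref{thm1}, \ref{thm2} and especially~\ref{thm6}: pass to the square roots $T_k^{1/2}$, dualize the restriction-to-the-diagonal map, and then integrate out the intermediate variables so that the product of kernels collapses to $\prod_k T_k^{-1}(x,w)$.

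First I would use positive-definiteness and self-adjointness to factor $T_k=(T_k^{1/2})^2$ and represent $T_k^{1/2}$ by a symmetric integral kernel $K_k(x,y)$ acting on the $k^{\mathrm{th}}$ variable. Since the $T_k^{1/2}$ act on distinct coordinates they commute, and the composite $\prod_k T_k^{1/2}$ carries the kernel $\prod_k K_k(x,y_k)$. Setting $F=\prod_k T_k^{1/2}f$, the right-hand side of \eqref{eq15} is $\int_{M\times\cdots\times M}|F|^2\,dx_1\cdots dx_m$, while the left-hand side is $\big[\int_M|(\mathcal A F)(x)|^q\,dx\big]^{2/q}$, where $(\mathcal A F)(x)=\int_{M\times\cdots\times M}\prod_k K_k(x,y_k)F(y)\,dy$ is the diagonal restriction of $\prod_k T_k^{-1/2}F$. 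Thus \eqref{eq15} with constant $E_T$ is exactly the assertion that the operator $\mathcal A\colon L^2(M\times\cdots\times M)\to L^q(M)$ satisfies $\|\mathcal A\|^2\le E_T$.

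Next I would invoke $L^q$-$L^p$ duality, $\|\mathcal A\|_{L^2\to L^q}=\|\mathcal A^{*}\|_{L^p\to L^2}$, where $(\mathcal A^{*}h)(y_1,\ldots,y_m)=\int_M\prod_k K_k(x,y_k)\,h(x)\,dx$ for $h\in L^p(M)$; this is the composite of the adjoint of diagonal restriction (a measure supported on the diagonal of $M\times\cdots\times M$) with the self-adjoint operators $T_k^{-1/2}$. Expanding the square, $\|\mathcal A^{*}h\|_{L^2}^2=\int_{M\times M}h(x)h(w)\prod_k\big[\int_M K_k(x,y_k)K_k(w,y_k)\,dy_k\big]dx\,dw$, and since $K_k$ is the symmetric kernel of $T_k^{-1/2}$ the inner integral is the kernel of $T_k^{-1/2}T_k^{-1/2}=T_k^{-1}$, namely $T_k^{-1}(x,w)$. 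Hence $\|\mathcal A^{*}h\|_{L^2}^2$ equals the left side of \eqref{eq16}, so \eqref{eq15} holds with constant $E_T$ if and only if \eqref{eq16} does, and the optimal constants coincide. The Fubini interchanges here are routine for $f\in\S$ and for the relevant test functions $h$; reducing to $h\ge0$ (harmless, since, as in the concrete cases, the collapsed kernel $\prod_k T_k^{-1}(x,w)$ is nonnegative) removes any integrability concern.

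The main obstacle is not the algebra but what is encoded in the phrase ``some suitable $q$'': one needs $E_T<\infty$, equivalently the boundedness of the collapsed Stein-Weiss / Hardy-Littlewood-Sobolev type form \eqref{eq16}, in order that $\mathcal A$ genuinely map into $L^q(M)$ and possess a bounded adjoint of the stated form. In the examples above this is precisely where the exponent constraints ($q\ge2$, $mn-\alpha=2n/q$, the admissible ranges for the $\alpha_k$) enter and where finiteness is verified by explicit kernel computations; in the abstract statement it is simply assumed, so that Theorem~\ref{thm7} records the equivalence of \eqref{eq15} and \eqref{eq16} with identical sharp constants rather than the underlying boundedness itself.
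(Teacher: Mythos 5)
Your proof follows the paper's own route: the paper's proof of Theorem~\ref{thm7} just says to repeat the argument of Theorem~\ref{thm6}, which is exactly the chain of reductions you describe (substitute $F=\prod_k T_k^{1/2}f$ to turn the quadratic form into $\|F\|_{L^2}^2$, dualize the diagonal-restriction map, and collapse the product of kernels into $\prod_k T_k^{-1}(x,w)$). One small slip to correct: you introduce $K_k$ as the kernel of $T_k^{1/2}$, but every place you use it you need --- and later implicitly take --- $K_k$ to be the symmetric kernel of $T_k^{-1/2}$, so that $\int_M K_k(x,y)K_k(w,y)\,dy=T_k^{-1}(x,w)$; with that fixed, the proposal is correct and identical in substance to the paper's argument, including your observation that ``suitable $q$'' simply encodes finiteness of $E_T$ in \eqref{eq16}.
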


\begin{proof}
Just follow the steps in the previous argument.
\end{proof}

\begin{rem} 
In the context of the conformally invariant structure discussed in the 
author's papers \cite{Beckner-PUP95} and \cite{Beckner-JFAA}, and 
more recent treatments of the fractional Laplacian (\cite{CS}, \cite{Chang-Gonz}),
it is natural to consider the multilinear trace 
estimate in the setting where $M$ is a hyperbolic manifold.
Broader questions can be addressed in that context, and they will be 
treated in a forthcoming paper \cite{Beckner-Lie}. 
\end{rem}
\section*{Acknowledgements}

I would like to thank Aynur Bulut for drawing my attention to the 
problems discussed here, 
Natasa Pavlovic and Thomas Chen for conversations on the Gross-Pitaevskii 
hierarchy, the referee for recognizing some similar features with the 
Kenig-Stein paper, and Eli Stein for his helpful editorial suggestions.
The first version of this
paper was written while visiting the Centro di Ricerca Matematica
Ennio De Giorgi in Pisa. 
The mathematical environment was lively and stimulating, and I appreciate 
the warm hospitality of Fulvio Ricci.


\end{document}